\theoremstyle{plain}
\newtheorem{thm}{Theorem}[section]
\newtheorem{prop}[thm]{Proposition}
\newtheorem{cor}[thm]{Corollary}
\newtheorem{cor*}[thm]{Corollary}
\newtheorem*{thm*}{Theorem}
\newtheorem*{prop*}{Proposition}
\theoremstyle{definition}
\newtheorem{defn}[thm]{Definition}
\theoremstyle{remark}
\newtheorem{remark}{Remark}
\newcommand\vv{\,\vert\,}
\newcommand\sub{\subseteq}
\newcommand\mc[1]{\mathcal #1}
\newcommand\dph{\text{depth}\,}
\newcommand\as{\text{Ass}\,}
\newcommand\rk{\text{rk }}
\newcommand\an{\text{ann}\,}
\newcommand\p[1]{\frak p_{#1}}
\newcommand\fg{finitely generated }
\newcommand\res{\text{res}}
\title{Depth and Associated Primes in Group Cohomology} 
\author{James A. Schafer}
\address{\begin{flushleft}\quad Department of Mathematics \\
\quad University of Maryland \\
\quad College Park, Maryland 20742
\end{flushleft}}
\date{\today}
\begin{document}
\maketitle

\section*{Introduction} If $R$ is a Noetherian local ring and $M$ is a \fg $R$-module then the depth of $M$ is defined to be the length of a maximal $M$-sequence in the maximal ideal, $\frak m$, of $R$.  Serre demonstrated that $$\dph M\leq \omega_M=\text{ the minimum dimension of an associated prime of } M,$$ that is the minimum dimension of a prime ideal equal to the annihilator of an element of $M$.  This result was easily translated to $*$-local $k$-algebras $A$, i.e., graded $k$-algebras with a unique homogeneous maximal ideal.  There are easy examples of strict inequality. After many calculations of the cohomology rings of finite groups, Carlson \cite{C}, asked whether the above inequality was true if $A=H^*(G,k)$ where $G$ was a finite group and $k$ was a field whose charateristic divided the order of $G$.  Carlson showed it was true if $\dim H^*(G,k)=2$. Duflot \cite{D} had shown $\dph H^*(G,k)\geq p$-rank of the center of a Sylow $p$-subgroup of $G$. A result for $p$-groups by Green \cite{G} and later extended by Kuhn \cite{K} to all finite groups showed that if the Duflot lower bound was an equality the conjecture was true.  In \cite{S} it is shown that if $\dim H^*(G,k)-\dph H^*(G,k)\leq 1$ the conjecture is true. To my knowledge nothing else is known.

It is a fact, \cite{N}, that $\dph H^*(C_GE,k)\geq\dph H^*(G,k)$ always. The aim of this paper to show that the Conner Conjecture is equivalent to the following: There exists an elementary abelian $p$-subgroup of $G$ of rank equal to $\omega_G=\omega_{H^*(G,k)}$ with $\dph H^*(C_GE,k)=\dph H^*(G,k)$.

It is also true, (\cite{B1}, \cite{Se}, \cite{W}), that if $P$ is an associated prime of $H^*(G,k)$, then
$$P=\frak p_E=\ker\{\res:H^*(G,k)\to H^*(E,k)/\text{rad}\}$$ for some elementary abelian $p$-subgroup of $G$.   We also show that if $\frak p_E=\an y$ and $\dim \frak p_E=\omega_G$ then $\frak P_E=\ker\{\res:H^*(C_GE,k)\to H^*(E,k)/\text{rad}\}$ is an associated prime of $H^*(C_GE,k)$ of dimension $\omega_G=\omega_{C_GE}$ and in fact $\frak P_E=\text{rad}(\an(\res\, y))$.

\section*{Notation and a useful result}

All rings will be commutative, N\"oetherian with identity. If $M$ is a \fg $R$-module, denote the associated primes of $M$ by $\as\,M$. $\mc AG$ will denote the elementary abelian $p$-subgroups of $G$ and $\mc AG^s$ those of rank $s$.
Unless otherwise stated $\res^G_H:H^*(G)\to H^*(H)$ for $H\sub G$ will be denoted by $\res_H$.

\medskip
The following useful result is easily extracted from Atiyah-MacDonald, \cite{AM}.

\begin{thm}\label{non} Let $f:R\to S$ be a monomorphism of rings and $P\in\as\,R$, then there exists $Q\in\as\, S$ with $f^{-1}Q=P$.\end{thm}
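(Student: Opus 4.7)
The plan is to produce $Q\in\as S$ as a maximal annihilator in $S$, selected so that throughout the construction the annihilator stays disjoint from $f(R\setminus P)$; that disjointness is exactly what enforces the desired contraction. Pick $0\ne x\in R$ with $P=\an x$ (annihilator taken in $R$). Then $f(x)\ne 0$ by injectivity, and working in $S$ one has $f^{-1}(\an f(x))=P$, since $f(r)f(x)=f(rx)=0$ forces $rx=0$. Now consider the family (annihilators in $S$)
$$\Sigma=\{\an y:0\ne y\in S,\ Py=0,\ \an y\cap f(R\setminus P)=\emptyset\},$$
which contains $\an f(x)$ and so, by Noetherianity of $S$, has a maximal element $J=\an y_0$.

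To show $J$ is prime, suppose $ab\in J$ and $b\notin J$: then $aby_0=0$ while $by_0\ne 0$, so $\an(by_0)\supseteq J$ and $P\cdot by_0=0$. The only obstacle to $\an(by_0)\in\Sigma$ is some $f(t)\in\an(by_0)\cap f(R\setminus P)$, i.e.\ $f(t)by_0=0$ with $t\notin P$. Set $y_0':=f(t)y_0$: this is nonzero (else $f(t)\in J$, violating disjointness for $J$), is annihilated by $P$, and satisfies $\an(y_0')\supseteq J+(b)\supsetneq J$. Any $f(t')\in\an(y_0')\cap f(R\setminus P)$ would then give $f(t't)\in J\cap f(R\setminus P)$, which is impossible, so $\an(y_0')\in\Sigma$, contradicting maximality of $J$. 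Hence $\an(by_0)\in\Sigma$, maximality forces $\an(by_0)=J$, and $a\in J$, proving $J$ prime.

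Therefore $J\in\as S$. The defining conditions of $\Sigma$ give $P\subseteq f^{-1}(J)$ (from $Py_0=0$) and $f^{-1}(J)\subseteq P$ (from the disjointness), so $f^{-1}(J)=P$, and we may take $Q:=J$. The main subtlety is propagating the disjointness condition through the enlargement step in the middle paragraph; without that restriction on $\Sigma$, the naive maximal-annihilator argument could easily terminate at an associated prime lying over a strict enlargement of $P$ (as happens, for example, in $R=k[x]\hookrightarrow S=k[x,y]/(xy)$, where $\an_S(y)=(x)$ is a perfectly valid maximal annihilator but contracts to $(x)\ne(0)=P$).
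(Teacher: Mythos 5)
Your proof is correct, and it takes a genuinely different route from the paper. The paper's argument is a three-line contraction of primary decompositions: take a minimal primary decomposition of $(0)$ in $S$, pull it back along $f$ to get a (not necessarily minimal) primary decomposition of $(0)$ in $R$, refine, and invoke the first uniqueness theorem to conclude $\as R\sub\{f^{-1}P_i\}$. You instead run the classical maximal-annihilator argument directly in $S$, but with the crucial twist of restricting to annihilators that contain $f(P)$ and avoid $f(R\setminus P)$; the auxiliary element $y_0'=f(t)y_0$ is exactly what is needed to show that the disjointness condition survives the enlargement step, and your final example ($k[x]\hookrightarrow k[x,y]/(xy)$ with $P=(0)$) correctly illustrates why the unconstrained maximal-annihilator argument would fail. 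What the paper's proof buys is brevity, at the cost of leaning on the full uniqueness theory of primary decomposition; what yours buys is a self-contained construction that uses only the ascending chain condition in $S$ and makes visible the mechanism forcing $f^{-1}Q=P$. One cosmetic point: in the definition of $\Sigma$ you should write $f(P)y=0$ rather than $Py=0$, since $y$ lives in $S$; the intent is clear and nothing in the argument is affected.
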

\begin{proof} Let $(0)=\bigcap Q_i$ be a minimal primary decomposition of $(0)\sub S$ with primes $P_i=\text{rad}\, Q_i$. Then $\as\,S=\{P_i\}$.  Since the contraction of a prmary ideal is primary and the contraction of the radical of an ideal is the radical of the contraction, $(0)=f^{-1}(0)=\bigcap f^{-1}Q_i$ is a primary decompositon of $(0)\sub R$ with primes $\{f^{-1}P_i\}$. Since any primary decompostion of an ideal can be refined to a minimal primary decomposition, $\as\,R\sub\{f^{-1}P_i\}$. 
\end{proof} 

\begin{remark} This does not mean that if $P\in\as\,S$ then $f^{-1}P\in\as\,R$ and in fact is false.\end{remark}

\section*{Special facts for $H^*(G,k)$}

From this point on $G$ will be a finite group, $k$ a field with trivial $G$-action and $p=\text{char}\,k$ dividing the order of $G$. Denote $H^*(G,k)$ by $H^*(G)$.
\medskip 

The following result may be found in \cite{E}, corollaries 7.4.6 and 7.4.7.

\begin{thm}\label{Ev}(1) $H^*(G)$ is a \fg $k$-algebra, which is the same as saying $H^*(G)$ is N\"oetherian.
\par (2) If $H\sub G$ is a subgroup then $H^*(H)$ is \fg as a module over $H^*(G)$ via the restriction map $\res:H^*(G)\to H^*(H)$\end{thm}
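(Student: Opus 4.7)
The plan is to reduce both parts to a single statement: for any finitely generated $k[G]$-module $M$, $H^*(G,M)$ is finitely generated as a module over a Noetherian polynomial subring of $H^*(G,k)$. Part (1) will then be the special case $M = k$, combined with the remark that finite generation as a module over a finitely generated $k$-algebra forces finite generation as a $k$-algebra. Part (2) will follow by taking $M = k[G/H] = \text{Ind}_H^G k$ and invoking Shapiro's lemma $H^*(G, k[G/H]) \cong H^*(H,k)$, where the $H^*(G,k)$-module structure on the right-hand side is exactly restriction.

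The input would be Venkov's topological construction. Via the regular representation I would embed $G \hookrightarrow U(n)$ with $n = |G|$, obtaining a free $G$-action on $U(n)$ and a fiber bundle
\[
  U(n)/G \longrightarrow BG \longrightarrow BU(n).
\]
The base has polynomial cohomology $H^*(BU(n);k) = k[c_1, \ldots, c_n]$ in the Chern classes and is Noetherian, while the fiber $U(n)/G$ is a compact manifold, so $H^*(U(n)/G; \underline{M})$ is finite-dimensional over $k$ for any finite-dimensional coefficient system associated to $M$. The Serre spectral sequence is a module over $H^*(BU(n);k)$, and a standard Noetherian descent along the pages yields that $E_\infty$, and hence the associated graded of $H^*(G,M)$, is finitely generated over $H^*(BU(n);k)$. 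Since this action factors through $H^*(G,k)$ via the edge map, $H^*(G,M)$ is a fortiori finitely generated over $H^*(G,k)$.

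The main obstacle is the spectral sequence step: one must verify that finite generation persists through the differentials to the abutment, and that the module structure over the polynomial base survives, which is standard for multiplicative spectral sequences of modules but requires attention, especially for nontrivial $M$. An alternative approach, matching the cited source \cite{E}, would replace Venkov's embedding with Evens' norm map and proceed by induction on $|G|$: for a $p$-group one uses the Lyndon--Hochschild--Serre spectral sequence of a central extension $1 \to C_p \to G \to G/C_p \to 1$, producing permanent cycles via the norm to kill infinite towers of differentials; the general case is reduced via transfer from a Sylow $p$-subgroup. Either route proves both (1) and (2) simultaneously once the strengthened module statement is in hand.
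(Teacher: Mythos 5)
The paper does not prove this statement; it is quoted verbatim from Evens \cite{E} (Corollaries 7.4.6 and 7.4.7), so there is no internal proof to compare against. Your outline is the standard Venkov-style proof and it is correct, with two points deserving explicit verification. First, in the reduction of (2) to the coefficient version of (1): for finite index, induction and coinduction agree, so Shapiro's lemma gives $H^*(G,k[G/H])\cong H^*(H,k)$, but you must also check that this isomorphism carries the cup-product action of $H^*(G,k)$ on the left to the restriction action on the right; this is Frobenius reciprocity (the projection formula for $\res$ and the transfer-free version of it), and it is exactly the content of Evens' 7.4.7 rather than a formality. Second, the spectral sequence step is unproblematic precisely because the fiber $U(n)/G$ is a compact manifold of some dimension $d$: the coefficient system on the simply connected base $BU(n)$ is untwisted, $E_2$ is a finitely generated $k[c_1,\dots,c_n]$-module, each differential is $k[c_1,\dots,c_n]$-linear since base classes are permanent cycles, $E_{d+2}=E_\infty$, and the filtration of each $H^n(G,M)$ has length at most $d+1$; Noetherianness then passes from $E_2$ through finitely many subquotients to the associated graded and hence to the abutment. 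The contrast with the cited source is real: Evens' norm-map argument is purely algebraic, proceeds by induction on $|G|$ through the Lyndon--Hochschild--Serre spectral sequence of a central extension plus Sylow transfer, and works over $\Z$ or any Noetherian coefficient ring, whereas the Venkov route buys brevity at the cost of invoking $H^*(BU(n))$ and the Serre spectral sequence. For the purposes of this paper, where $k$ is a field, either proof suffices, and your strengthened statement (finite generation of $H^*(G,M)$ for every finitely generated $kG$-module $M$) is in fact the form most useful later, e.g.\ for the claim that $H^*(C_GE)$ is finitely generated over $H^*(G)$.
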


What properties does $H^*(G)$ have concerning depth and associated primes have that an arbitrary \fg graded $k$-algebra may not.

\begin{thm} Let $N\sub G$ with $[G:N]$ relatively prime to $p$. If $\{x_i\}\sub H^*(G)$ is a finite set of homogeneous elements and $\{\text{res}_N\,x_i\}$ is a regular sequence in $H^*(N)$ then $\{x_i\}$ is a regular sequence in $H^*(G)$.\end{thm}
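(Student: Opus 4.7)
The plan is to exploit the transfer (corestriction) map $\mathrm{cor}:H^*(N)\to H^*(G)$ together with the hypothesis that $[G:N]$ is coprime to $p$. By the standard double coset / projection formula identity $\mathrm{cor}\circ\res=[G:N]\cdot\mathrm{id}$, and since $[G:N]$ is a unit in $k$, the map $\frac{1}{[G:N]}\mathrm{cor}$ is an $H^*(G)$-module retraction of $\res$. Thus $H^*(G)$ sits as an $H^*(G)$-module direct summand of $H^*(N)$; write $H^*(N)=H^*(G)\oplus M$ as $H^*(G)$-modules. In particular $\res$ is injective, which already handles the length-one case: if $x_1 y=0$ in $H^*(G)$ then $\res(x_1)\res(y)=0$, so $\res(y)=0$ by hypothesis, hence $y=0$.

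I would then proceed by induction on the length $r$ of the sequence. Set $I_{r-1}=(x_1,\dots,x_{r-1})\subset H^*(G)$ and assume by induction that $x_1,\dots,x_{r-1}$ is regular in $H^*(G)$. The ideal of $H^*(N)$ generated by $\res(x_1),\dots,\res(x_{r-1})$ is exactly $I_{r-1}H^*(N)$, so extending scalars along the split inclusion $H^*(G)\hookrightarrow H^*(N)$ yields a splitting of $H^*(G)/I_{r-1}$-modules
\[
H^*(N)/I_{r-1}H^*(N)\;=\;H^*(G)/I_{r-1}\;\oplus\;M/I_{r-1}M.
\]
In particular $H^*(G)/I_{r-1}$ is an $H^*(G)/I_{r-1}$-submodule of $H^*(N)/(\res x_1,\dots,\res x_{r-1})$.

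Now the action of $x_r$ on $H^*(G)/I_{r-1}$ agrees, under this inclusion, with the action of $\res(x_r)$ on $H^*(N)/(\res x_1,\dots,\res x_{r-1})$: for $a\in H^*(G)$ we have $x_r\cdot a=\res(x_r)\cdot a$ when $a$ is viewed in $H^*(N)$ via $\res$. Since $\res(x_r)$ is a non–zero-divisor on the larger module by hypothesis, it is a non–zero-divisor on any submodule, and hence $x_r$ is a non–zero-divisor on $H^*(G)/I_{r-1}$. This completes the inductive step. (Properness of the generated ideal is automatic: the $x_i$ are homogeneous of positive degree, since otherwise $\res(x_i)$ would be a unit and could not belong to a regular sequence.)

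The only point that requires any care is confirming that $\res$ is an $H^*(G)$-module map and that $\mathrm{cor}$ is an $H^*(G)$-module map—both follow from the projection formula $\mathrm{cor}(\res(y)\cdot z)=y\cdot\mathrm{cor}(z)$—and that extension of scalars by the ideal $I_{r-1}$ preserves the direct sum decomposition. This last point is immediate from functoriality of $-\otimes_{H^*(G)}H^*(G)/I_{r-1}$ applied to $H^*(N)=H^*(G)\oplus M$. No other step should present any genuine obstacle.
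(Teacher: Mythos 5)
Your proof is correct, and it takes a genuinely different route from the paper's. Both arguments begin from the same key structural fact — the transfer splitting $H^*(N)\cong H^*(G)\oplus M$ as $H^*(G)$-modules, coming from $\mathrm{cor}\circ\res=[G:N]\cdot\mathrm{id}$ with $[G:N]$ invertible in $k$ — but they diverge after that. The paper invokes Kaplansky's characterization of regular sequences in a non-negatively graded finitely generated $k$-algebra ($\{x_i\}\subseteq R^+$ is regular iff $k[x_1,\dots,x_n]$ is a polynomial subring and $R$ is free over it): it deduces that $H^*(N)$ is free over $k[x_1,\dots,x_n]$, observes that the summand $H^*(G)$ is then projective and hence free over this graded polynomial ring, and applies Kaplansky's criterion again. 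You instead run a direct induction on the length of the sequence, tensoring the splitting with $H^*(G)/I_{r-1}$ to exhibit $H^*(G)/I_{r-1}$ as a direct summand (in particular a submodule) of $H^*(N)/(\res x_1,\dots,\res x_{r-1})$, on which multiplication by $x_r$ is the restriction of multiplication by $\res(x_r)$ and hence injective. Your argument is more elementary — it needs only functoriality of $-\otimes_{H^*(G)}H^*(G)/I_{r-1}$ and avoids both Kaplansky's theorem and the graded ``projective over a polynomial ring implies free'' step — while the paper's argument packages the whole sequence at once and makes the freeness structure explicit. Your handling of the side conditions (injectivity of $\res$ for the base case, positivity of the degrees of the $x_i$ for properness of the generated ideal) is also sound.
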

\begin{proof} In \cite{Kap} it is shown that for a non-negatively graded \fg $k$-algebra $R$, $\{x_i\}\sub R^+$ is a regular sequence if and only $k[x_1,\dots,x_n]$ is a polynomial subring of $R$ and $R$ is free over $k[x_1,\dots,x_n]$. If $\{\text{res}_N\,x_i\}$ is a regular sequence in $H^*(N)$, then $k[\res_N\,x_1,\dots,\res_Nx_n]$ is a polynomial algebra and $H^*(N)$ is free over $k[\res_Nx_1,\dots,\res_Nx_n]$. Hence $H^*(N)$ is free over $k[x_1,\dots,x_n]\sub H^*(G)$.
 
$H^*(G)\sub H^*(N)$ is a $H^*(G)$-summand of $H^*(N)$ therefore projective hence a free $k[x_1,\dots,x_n]$-module and $\{x_i\}$ is a regular sequence in $H^*(G)$.\end{proof}

\begin{thm}(Duflot,\,\cite{D})\label{Duf} Let $C\sub G$ be a central subgroup. If $\{x_i\}\sub H^*(G)$ is a finite set of homogeneous elements and $\{\text{res}_C\,x_i\}$ is a regular sequence in $H^*(C)$ then $\{x_i\}$ is a regular sequence in $H^*(G)$.\end{thm}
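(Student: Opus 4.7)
The plan is to reprise the architecture of the previous theorem's proof via Kaplansky's criterion: $\{x_i\}$ is a regular sequence in $H^*(G)$ if and only if $k[x_1,\dots,x_n]$ is a polynomial subring of $H^*(G)$ and $H^*(G)$ is free over it. Polynomiality is automatic, since $\res_C$ is a ring map and $\{\res_C\,x_i\}$ are algebraically independent (they form a regular sequence in $H^*(C)$). What must be replaced from the earlier argument is the transfer splitting, which used $[G:N]$ coprime to $p$; for a central subgroup a different splitting is needed.

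That splitting comes from centrality: the multiplication map $\mu\colon C\times G \to G$, $(c,g)\mapsto cg$, is a group homomorphism (this is exactly where centrality enters), so it induces a ring map $\mu^*\colon H^*(G) \to H^*(C\times G) \cong H^*(C)\otimes_k H^*(G)$. The natural inclusions $\iota_G\colon g \mapsto (1,g)$ and $\iota_C\colon c \mapsto (c,1)$ satisfy $\mu\circ\iota_G = \mathrm{id}_G$ and $\mu\circ\iota_C$ equals the inclusion $C\hookrightarrow G$, so under the K\"unneth identification
$$(\epsilon\otimes\mathrm{id})\,\mu^* = \mathrm{id}_{H^*(G)}, \qquad (\mathrm{id}\otimes\epsilon)\,\mu^* = \res_C,$$
where $\epsilon$ denotes augmentation. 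Give $R:=H^*(C)\otimes H^*(G)$ the $H^*(G)$-module structure by left multiplication through $\mu^*$. A short calculation using that $\epsilon$ is a ring map shows the splitting $\epsilon\otimes\mathrm{id}\colon R\to H^*(G)$ is $H^*(G)$-linear for this action, exhibiting $H^*(G)$ as an $H^*(G)$-module summand of $R$.

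Because a regular sequence restricts to any module summand, it suffices to show $\{\mu^*(x_i)\}$ is a regular sequence in the ring $R$. Unpacking the augmentation identities, $\mu^*(x_i) = \res_C(x_i)\otimes 1 + 1\otimes x_i + \zeta_i$ with $\zeta_i \in H^{\geq 1}(C)\otimes H^{\geq 1}(G)$. Filter $R$ by $F^q := H^*(C)\otimes H^{\geq q}(G)$, a decreasing multiplicative filtration, separated on each total-degree piece, whose associated graded is again $H^*(C)\otimes H^*(G)$. Then $\mu^*(x_i) \equiv \res_C(x_i)\otimes 1 \pmod{F^1}$, so the leading term of $\mu^*(x_i)$ is $\res_C(x_i)\otimes 1$. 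By hypothesis and Kaplansky applied inside $H^*(C)$, $H^*(C)$ is free over $k[\res_C\,x_i]$; tensoring over $k$ with $H^*(G)$ preserves freeness, making $R$ free over the polynomial subring $k[\res_C(x_i)\otimes 1]$, so the leading terms form a regular sequence in $\mathrm{gr}_F R$. The standard lift---regularity on the associated graded implies regularity in the filtered ring when the filtration is separated in each finite total degree---then yields regularity of $\{\mu^*(x_i)\}$ on $R$, and hence of $\{x_i\}$ on $H^*(G)$.

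The main technical obstacle is isolating the leading term of $\mu^*(x)$; this rests on combining the two augmentation identities to get the explicit expansion above. Once the leading term is pinned down, the summand reduction and the gr-to-filtered lift for regular sequences are routine.
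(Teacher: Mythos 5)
The paper gives no proof of this statement---it is quoted verbatim from Duflot \cite{D}---so there is nothing internal to compare against, and your argument has to stand on its own. It does: what you have written is essentially the Broto--Henn proof of Duflot's theorem, which is purely algebraic apart from the K\"unneth isomorphism, whereas Duflot's original argument runs through equivariant cohomology of free $C$-spaces. The key steps all check out: centrality is exactly what makes $\mu\colon C\times G\to G$ a homomorphism; the two counit identities force $\mu^*(x)=\res_C(x)\otimes 1+1\otimes x+\zeta$ with $\zeta\in H^{\geq 1}(C)\otimes H^{\geq 1}(G)$; the filtration $F^q=H^*(C)\otimes H^{\geq q}(G)$ is multiplicative and finite in each total degree, so the symbol-lifting lemma applies (the induction closes because regularity of $\res_C(x_1)\otimes 1$ on $\mathrm{gr}\,R$ gives $\mathrm{gr}(R/\mu^*(x_1)R)\cong \mathrm{gr}(R)/(\res_C(x_1)\otimes 1)$); and freeness of $H^*(C)$ over $k[\res_C x_1,\dots,\res_C x_n]$, supplied by Kaplansky's criterion, makes the symbols regular on $\mathrm{gr}\,R\cong H^*(C)\otimes H^*(G)$. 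Three small things are worth making explicit. First, the section exhibiting $H^*(G)$ as a summand is $\mu^*$ itself, which is $H^*(G)$-linear precisely because the module structure on $R$ is defined through it. Second, restricting a regular sequence to a direct summand requires the properness condition $(x_1,\dots,x_n)H^*(G)\neq H^*(G)$, which here is automatic from connectedness and the graded Nakayama lemma. Third, your opening paragraph's Kaplansky framing for $H^*(G)$ is never used---the summand-plus-filtration argument establishes regularity directly---so it can be deleted without loss.
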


\begin{cor} Let $C\sub Z(P)\sub P\sub G$ where $P$ is a Sylow $p$-subgroup of $G$.  If $\{x_i\}\sub H^*(G)$ is a finite set of homogeneous elements and $\{\text{res}_C\,x_i\}$ is a regular sequence in $H^*(C)$ then $\{x_i\}$ is a regular sequence in $H^*(G)$.\end{cor}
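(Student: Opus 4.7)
The proof is a straightforward two-step chain using the preceding two theorems, restricting first to the Sylow $p$-subgroup $P$ and then further to the central subgroup $C$.

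First I would observe that since $P$ is a Sylow $p$-subgroup of $G$, the index $[G:P]$ is relatively prime to $p$. So the previous theorem (with $N=P$) tells us that the conclusion $\{x_i\}$ is regular in $H^*(G)$ will follow as soon as we know $\{\res_P x_i\}$ is a regular sequence in $H^*(P)$.

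Next, to produce that regular sequence in $H^*(P)$, I would apply Duflot's theorem (Theorem~\ref{Duf}) to the inclusion $C\subseteq Z(P)\subseteq P$, which is legitimate precisely because $C$ is central in $P$. The homogeneous elements in question are $\res_P x_i \in H^*(P)$, and by functoriality of restriction ($\res^P_C \circ \res^G_P = \res^G_C$) we have $\res_C(\res_P x_i)=\res_C x_i$, which is a regular sequence in $H^*(C)$ by hypothesis. Duflot's theorem then gives that $\{\res_P x_i\}$ is a regular sequence in $H^*(P)$.

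Combining the two steps finishes the proof. There is really no obstacle here: the whole point of stating this as a corollary is to package the fact that one can always reduce from $G$ to a Sylow $p$-subgroup via the index-coprime-to-$p$ theorem, and then apply Duflot inside the $p$-group where a central subgroup is readily available. The only subtlety worth flagging is the transitivity of restriction, which is what makes the composite hypothesis $\res_C x_i = \res_C(\res_P x_i)$ match the hypothesis needed to invoke Theorem~\ref{Duf}.
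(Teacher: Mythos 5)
Your proof is correct and is exactly the intended argument: the paper states this corollary without proof, as an immediate combination of the preceding theorem (applied to $N=P$, whose index is prime to $p$) and Duflot's theorem (applied to $C$ central in $P$), using transitivity of restriction just as you note.
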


\begin{cor} $\dph H^*(G)\geq p$-rank of the center of a Sylow $p$-subgroup.\end{cor}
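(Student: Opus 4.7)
The plan is to exhibit a regular sequence in $H^*(G)$ of length $r$, where $r$ denotes the $p$-rank of $Z(P)$, by first constructing a regular sequence of that length in $H^*(E)$ for an elementary abelian $E\sub Z(P)$ of maximal rank, and then descending it via the preceding corollary.

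First, fix a Sylow $p$-subgroup $P$ and choose an elementary abelian $p$-subgroup $E\sub Z(P)$ with $\rk E = r$. It is standard that $H^*(E)$ is a Cohen--Macaulay graded $k$-algebra of Krull dimension $r$: modulo its nilradical it is a polynomial ring on $r$ generators (the Bocksteins of the degree-one classes in odd characteristic, the degree-one classes themselves in characteristic two), and the full ring is a finite free module over this polynomial subring.

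Next, by Theorem \ref{Ev}(2), $H^*(E)$ is finitely generated as a module over $H^*(G)$ via $\res_E$, so $\res_E(H^*(G))\sub H^*(E)$ is a finite ring extension and $\res_E(H^*(G))$ also has Krull dimension $r$. Graded Noether normalization applied to this subring then produces $r$ algebraically independent homogeneous elements $u_1,\dots,u_r\in \res_E(H^*(G))$ over which $H^*(E)$ is module-finite. Because $H^*(E)$ is Cohen--Macaulay, such a homogeneous system of parameters is automatically a regular sequence, so $\{u_i\}$ is a regular sequence in $H^*(E)$.

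Finally, I would lift each $u_i$ to an element $v_i\in H^*(G)$ with $\res_E v_i = u_i$. The preceding corollary, applied with $C = E\sub Z(P)$, then guarantees that $\{v_i\}$ is a regular sequence of length $r$ in $H^*(G)$, whence $\dph H^*(G)\geq r$. The one technical step is the construction of the $u_i$ inside the image of $\res_E$; it combines the cohomological finiteness furnished by Theorem \ref{Ev}(2) with the Cohen--Macaulayness of $H^*(E)$, and everything else is a direct appeal to the preceding corollary.
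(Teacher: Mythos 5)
Your argument is correct and is exactly the intended derivation that the paper leaves implicit: the corollary is meant to follow from the preceding one by producing, inside the image of $\res_E$ for a maximal elementary abelian $E\sub Z(P)$, a homogeneous system of parameters for $H^*(E)$ (via the Evens--Venkov finiteness of Theorem \ref{Ev}(2) and graded Noether normalization), observing that Cohen--Macaulayness of $H^*(E)$ makes it a regular sequence, and lifting to $H^*(G)$. The only point worth a remark is that in odd characteristic the $u_i$ should be taken in even degrees (any odd-degree element is nilpotent, so this is automatic for a system of parameters), after which everything you wrote goes through.
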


\begin{defn} If $H\sub G$ and $E\in\mathcal AH$, let \
$$\frak p_E^H=\ker\{\pi\,\res:H^*(H)\to H^*(E)\to H^*(E)/\text{Rad}\}.$$
If $H=G$ denote $\frak p_E^G$ by $\frak p_E$
\end{defn}

\begin{thm}(Serre,\,\cite{Se},Wilkerson,\,\cite{W}) If $P$ is an associated prime of $H^*(G)$ then $P=\frak p_E$ for some $E\in\mc AG$.
\end{thm}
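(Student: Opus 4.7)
The plan is to combine Theorem~\ref{non} with Quillen's detection theorem. Quillen's theorem --- a consequence of Theorem~\ref{Ev}(2) and the polynomial structure of $H^*(E)/\text{Rad}$ --- gives a monomorphism of N\"oetherian rings
\[
\bar\rho : H^*(G)/\sqrt{0} \;\hookrightarrow\; S := \prod_{[E]\in\mc AG/G} H^*(E)/\text{Rad},
\]
where the product, being over conjugacy classes, is finite. Each factor of $S$ is a polynomial ring, and so $\as S$ consists of the kernels of the coordinate projections $\pi_E$; the corresponding contraction to $H^*(G)/\sqrt{0}$ is precisely $\frak p_E/\sqrt{0}$. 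Theorem~\ref{non} therefore yields $\as(H^*(G)/\sqrt{0}) \sub \{\frak p_E/\sqrt{0} : E \in \mc AG\}$.

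To conclude, one must lift this from the reduced ring to $H^*(G)$ itself: given $P \in \as H^*(G)$, one wants to show $P/\sqrt{0} \in \as(H^*(G)/\sqrt{0})$. This passage is not automatic in general (the Remark after Theorem~\ref{non} cautions that embedded primes can vanish under reduction), so one exploits the structure of $H^*(G)$ directly. Writing $P = \an y$ for a homogeneous $y$, there are two cases. When $y$ is non-nilpotent, Quillen's theorem produces $E \in \mc AG$ with $\res_E y$ a non-zero-divisor in $H^*(E)$; composing the natural inclusion $H^*(G)/P \hookrightarrow H^*(G)$ with $\res_E$ and arguing carefully with the choice of $E$ of maximal rank, one obtains $H^*(G)/P \hookrightarrow H^*(E)$ as $H^*(G)$-modules. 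The module-theoretic form of Theorem~\ref{non} together with the direct computation $\as_{H^*(G)} H^*(E) = \{\frak p_E\}$ (itself obtained by checking that the only associated prime of $H^*(E) = \LL(t_1,\dots,t_r) \ot{} k[x_1,\dots,x_r]$ is the nilradical, and then contracting via $\res_E$) then gives $P = \frak p_E$.

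The hard part is the nilpotent case, exemplified by $G = (\Z/p)^r$ with $y = t_1\cdots t_r$ and $P = \frak p_G = \text{Rad}\,H^*(G)$. Here $P$ admits no non-nilpotent generator, so the first argument cannot be run. Instead one uses the central elementary abelian subgroups furnished by Theorem~\ref{Duf} to identify the correct $E$: the socle of $H^*(G)$ (and hence $y$) is supported on a component of $\operatorname{Spec} H^*(G)$ cut out by some $\frak p_E$ with $E$ central, and Duflot's regular-sequence theorem forces $P = \frak p_E$ in this case. This analysis of the nilpotent socle, rather than Theorem~\ref{non} itself, is the substance of the proofs in \cite{B1}, \cite{Se}, \cite{W}.
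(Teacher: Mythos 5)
The paper does not prove this theorem at all: it is quoted as a known result of Serre and Wilkerson (with proofs in \cite{Se}, \cite{W}, \cite{B1}), so there is no in-paper argument to compare against. Judged on its own, your proposal correctly disposes of the easy half but leaves the actual content of the theorem unproved. The first part (the embedding $H^*(G)/\sqrt{0}\hookrightarrow\prod_E H^*(E)/\text{Rad}$ plus Theorem~\ref{non}) only shows that the \emph{minimal} primes are of the form $\frak p_E$, which is already contained in Quillen's Theorem~\ref{Qu}; the whole point of the Serre--Wilkerson statement is the embedded associated primes, and as you note yourself the reduction $\as H^*(G)\to\as(H^*(G)/\sqrt 0)$ does not exist.

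The genuine gap is the sentence ``arguing carefully with the choice of $E$ of maximal rank, one obtains $H^*(G)/P\hookrightarrow H^*(E)$.'' That embedding is the entire theorem. Knowing that $\res_E y$ is a non-zero-divisor only says the map $a\mapsto\res_E(ya)$ is nonzero at $a=1$; injectivity requires $\res_E(ya)\neq 0$ for \emph{every} $a\notin P$, and no choice of $E$ ``of maximal rank'' visibly delivers this from Quillen's detection-modulo-nilpotents alone. In the literature this step is supplied either by the Evens norm map (Wilkerson's letter, Benson \cite{B1}) or, more in the spirit of this paper, by Carlson's theorem (quoted later in the text): with $s=\dim H^*(G)/P$, the kernels $K_E=\ker\bigl(yH^*(G)\to H^*(E)\bigr)$ for $E\in\mc AG^s$ are ideals of the domain $H^*(G)/P$ whose intersection is zero by Carlson, so one of them is already zero; then the module form of Theorem~\ref{non} together with $\as H^*(E)=\{\text{Rad}\}$ gives $P=\frak p_E$. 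Note this argument is uniform in $y$, so your dichotomy ``$y$ nilpotent versus non-nilpotent'' is a red herring; moreover the nilpotent branch as written is not an argument. In particular the appeal to the socle of $H^*(G)$ cannot work, since Duflot's bound gives $\dph H^*(G)\geq z_G\geq 1$ and hence the socle is zero, and the claim that the relevant $E$ may be taken central is false in general (Proposition~\ref{E0} only forces $E\supseteq E_0$).
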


\begin{thm}(Quillen,\,\cite{Q})\label{Qu} $\frak p_E\sub \frak p_{E'}$ iff $E'$ is conjugate to a subgroup of $E$. Hence since minimal primes are associated primes, minimal primes in $H^*(G)$ correspond to conjugacy classes of maximal $p$-elementary abelian subgroups of $G$. \end{thm}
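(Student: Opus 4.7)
\emph{Easy direction $(\Leftarrow)$.} Assume $gE'g^{-1}\subseteq E$ for some $g\in G$, and set $E'':=gE'g^{-1}$. Since inner automorphisms of $G$ act trivially on $H^*(G)$, the restriction factors as
\[
\text{res}^G_{E'}\;=\;\phi_g^*\circ\text{res}^E_{E''}\circ\text{res}^G_E,
\]
where $\phi_g^*:H^*(E'')\xrightarrow{\sim}H^*(E')$ is the conjugation isomorphism induced by $e\mapsto g^{-1}eg$. Each factor carries nilpotent elements to nilpotent elements, so $x\in\frak p_E$ (i.e.\ $\text{res}^G_E(x)$ nilpotent) immediately forces $\text{res}^G_{E'}(x)$ nilpotent, hence $x\in\frak p_{E'}$.

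\emph{Hard direction $(\Rightarrow)$.} I argue by contrapositive: if no $G$-conjugate of $E'$ lies in $E$, I produce a class $y\in\frak p_E\setminus\frak p_{E'}$. The starting ingredient is Serre's top class $\lambda_{E'}\in H^*(E')$---the product of Bocksteins of the nonzero linear forms on $E'$, averaged over the Weyl group $W_G(E'):=N_G(E')/C_G(E')$---which is non-nilpotent but restricts nilpotently to every proper subgroup of $E'$. The plan is to promote $\lambda_{E'}$ into a class on $G$ that still separates $E$ from $E'$. The Evens multiplicative norm $y:=N^G_{E'}(\lambda_{E'})$ is the natural candidate, and the Mackey formula
\[
\text{res}^G_F\circ N^G_{E'}\;=\;\prod_{g\in F\backslash G/E'}N^F_{F\cap gE'g^{-1}}\circ\phi_g^*\circ\text{res}^{E'}_{g^{-1}Fg\cap E'}
\]
makes the $g$-th factor nilpotent whenever $E'\not\subseteq g^{-1}Fg$. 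Taking $F=E$: by hypothesis every factor is nilpotent, so $\text{res}^G_E(y)$ is nilpotent and $y\in\frak p_E$. Taking $F=E'$: the identity double coset contributes $\lambda_{E'}$, non-nilpotent; the normalizing cosets contribute $W_G(E')$-translates of $\lambda_{E'}$, again non-nilpotent by our averaging; and the remaining double cosets contribute factors that are nilpotent. Since a single nilpotent factor can poison the whole product, one must modify $y$---either by raising to a $p^n$-th power and multiplying by a suitable Dickson invariant, or equivalently by localizing at $\frak p_{E'}$ as in Quillen---to clear the offending factors while keeping the result non-nilpotent modulo $\text{rad}$. This gives $\text{res}^G_{E'}(y)\notin\text{rad}\,H^*(E')$, i.e.\ $y\notin\frak p_{E'}$.

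The main obstacle is precisely this last cleanup step in the Mackey computation for $F=E'$: ensuring that the non-normalizing contributions do not swallow the $\lambda_{E'}$ coming from the identity coset. This is the technical heart of Quillen's stratification and requires combining the multiplicative norm with the detection-of-nilpotence theorem (Theorem~\ref{non}-type arguments applied at each elementary abelian level). Granted the construction of the separating $y$, the final sentence of the theorem is immediate: $\frak p_E$ is minimal in $\as\,H^*(G)$ iff there is no $\frak p_{E''}\subsetneq\frak p_E$, equivalently (by the equivalence just proved) no elementary abelian subgroup properly contains a conjugate of $E$, i.e.\ $E$ is maximal in $\mc AG$ up to conjugation.
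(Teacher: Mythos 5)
The paper offers no proof of this statement at all --- it is imported wholesale from Quillen, so there is nothing internal to compare your argument against; what matters is whether your sketch would actually stand on its own. The easy direction $(\Leftarrow)$ is fine: restriction to $E'$ factors through restriction to $E$ up to the conjugation isomorphism, so $\frak p_E\sub\frak p_{E'}$. (For the final sentence you also need, and correctly use, the preceding Serre--Wilkerson theorem to know every minimal prime is some $\frak p_E$.)

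The hard direction, however, contains a genuine gap, and it sits exactly where you locate it. With $y=N^G_{E'}(\lambda_{E'})$ as defined, the double coset formula for $\res^G_{E'}(y)$ is a \emph{product} in which every non-normalizing double coset $E'gE'$ contributes $N^{E'}_{E'\cap gE'g^{-1}}$ applied to the restriction of $\lambda_{E'}$ to a proper subgroup of $E'$; that restriction is nilpotent, the norm of a nilpotent class is nilpotent, and one nilpotent factor makes the whole product nilpotent. So unless $E'$ is self-normalizing-up-to-centralizer in a very strong sense, your candidate $y$ actually lies in $\frak p_{E'}$ and fails to separate. The proposed repairs (``raise to a $p^n$-th power and multiply by a Dickson invariant, or localize at $\frak p_{E'}$'') are named but not executed, and they are not routine: the standard fix (Evens' proof of Quillen's theorem) applies the norm to the inhomogeneous unit $1+\lambda_{E'}$ and extracts a suitable homogeneous component, precisely so that the offending double cosets contribute units-plus-nilpotents rather than nilpotent factors. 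Without carrying out that step, no element of $\frak p_E\setminus\frak p_{E'}$ has been produced, so the contrapositive is not established. Since this is the entire content of Quillen's theorem, the honest conclusion is that your writeup is a correct roadmap with the crucial step left as a citation --- which is, in effect, what the paper itself does by attributing the result to \cite{Q}.
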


\begin{cor} Let \,$E$ be elementary $p$-abelian.  Then there exists $\tau_E\in\frak p_E$ such that $\tau_{E}\notin\frak p_{E'}$ for any elemenary abelian $E'$ not conjugate to a subgroup of $E$.\end{cor}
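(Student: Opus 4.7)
The plan is to reduce the statement to the classical prime avoidance lemma. By Theorem \ref{Qu}, whenever $E' \in \mc AG$ is not conjugate to a subgroup of $E$, the containment $\mathfrak p_E \sub \mathfrak p_{E'}$ fails, so some element of $\mathfrak p_E$ lies outside $\mathfrak p_{E'}$. The task is to promote this one-at-a-time existence to a single element avoiding every such $\mathfrak p_{E'}$ simultaneously.

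The first step I would carry out is a reduction to a \emph{finite} collection of primes. Conjugation by $g \in G$ acts trivially on $H^*(G)$, and sends the restriction $\res_{E'}$ to the restriction $\res_{gE'g^{-1}}$ up to the isomorphism $H^*(E') \cong H^*(gE'g^{-1})$ induced by $E' \cong gE'g^{-1}$. This isomorphism preserves nilpotents, so
$$\mathfrak p_{E'} = \mathfrak p_{gE'g^{-1}}.$$
Thus $\mathfrak p_{E'}$ depends only on the $G$-conjugacy class of $E'$, and since $G$ is finite there are only finitely many such conjugacy classes of elementary abelian $p$-subgroups.

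The second step is to invoke prime avoidance. Let $\mathcal S$ be the (finite) collection of primes $\mathfrak p_{E'}$ as $E'$ ranges over representatives of conjugacy classes of elementary abelian $p$-subgroups not conjugate to a subgroup of $E$. By Quillen's theorem none of these contains $\mathfrak p_E$, so by prime avoidance $\mathfrak p_E \not\sub \bigcup_{P \in \mathcal S} P$; any $\tau_E$ in the complement does the job, and by the graded version of prime avoidance one can take $\tau_E$ homogeneous.

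I do not anticipate any real obstacle. The only point requiring a moment's care is the conjugation invariance $\mathfrak p_{E'} = \mathfrak p_{gE'g^{-1}}$, which is what cuts the family of primes to be avoided down to a finite list so that prime avoidance applies.
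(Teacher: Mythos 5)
Your proposal is correct and follows essentially the same route as the paper: Quillen's theorem gives $\mathfrak p_E \not\subseteq \mathfrak p_{E'}$ for each offending $E'$, and prime avoidance over the (finite) family of such primes produces $\tau_E$. The conjugation-invariance reduction is a nice touch but not strictly needed, since $G$ is finite and hence has only finitely many elementary abelian subgroups to begin with.
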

\begin{proof} Let $W=\{E'\in\mathcal A(G)\vv E'\,\text{not conjugate to a subgroup of}\,E\}$. Then $\frak p_E\not\subseteq \frak p_{E'}$ for any $E'\in W$.  By Prime Avoidence $\frak p_E\not\subseteq\bigcup_W\frak p_{E'}$.  Therefore there exists $\tau_E\in\frak p_E$ such that $\tau_{E}\notin\frak p_{E'}$ for any $E'\in W$. \end{proof} 

\begin{prop}\label{E0} Let $E_0$ be the maximal $p$-elementary abelian subgroup of the center of $P$ where $P$ is a Sylow $p$-subgroup of $G$.  Then if $\frak p_E\in\as H^*(G)$, $E_0\sub E$.\end{prop}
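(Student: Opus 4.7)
The plan is to prove the equivalent statement $\mathfrak p_E\subseteq\mathfrak p_{E_0}$; by Theorem~\ref{Qu} this gives that $E_0$ is conjugate to a subgroup of $E$, and since $E_0\subseteq Z(P)$, after conjugating so that $E\subseteq P$ we obtain $E_0\subseteq E$.

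\textit{Setting up the Duflot regular sequence.} By the corollary to Theorem~\ref{Duf} (applied with $C=E_0\subseteq Z(P)$) we can pick a homogeneous regular sequence $x_1,\dots,x_r\in H^*(G)^+$, where $r=\rk E_0$, chosen so that $\{\res_{E_0}(x_i)\}$ is a homogeneous system of parameters for $H^*(E_0)$. The Kaplansky criterion invoked in the proof of Theorem~3.3 then makes $H^*(G)$ a free module over the polynomial subring $k[x_1,\dots,x_r]$. It follows that for any nonzero $y\in H^*(G)$, no nonzero element of $k[x_1,\dots,x_r]$ annihilates $y$; if $\an y=\mathfrak p_E$, then $\mathfrak p_E\cap k[x_1,\dots,x_r]=0$ and in particular each $x_i\notin\mathfrak p_E$.

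\textit{The contrapositive via $\tau_E$.} Suppose, for contradiction, that $E_0\not\subseteq E$ after conjugating so $E\subseteq P$; then $E_0$ is not conjugate to a subgroup of $E$. The Corollary just above the statement supplies $\tau_E\in\mathfrak p_E$ with $\tau_E\notin\mathfrak p_{E_0}$. Let $y$ satisfy $\an y=\mathfrak p_E$. From $\tau_E y=0$ we obtain $\res_{E_0}(\tau_E)\,\res_{E_0}(y)=0$ in $H^*(E_0)=k[t_i]\otimes\Lambda(s_j)$. Because $\tau_E\notin\mathfrak p_{E_0}$, the polynomial part of $\res_{E_0}(\tau_E)$ is nonzero, which makes $\res_{E_0}(\tau_E)$ a non-zero-divisor in $H^*(E_0)$; consequently $\res_{E_0}(y)=0$, i.e.\ $y\in\ker\res_{E_0}$. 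Replacing $y$ by $zy$ for any $z\notin\mathfrak p_E$ (which preserves $\an=\mathfrak p_E$) and running the same argument, one sees that in fact the entire cyclic submodule $H^*(G)y\cong H^*(G)/\mathfrak p_E$ lies in $\ker\res_{E_0}$.

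\textit{Extracting the contradiction.} The core of the argument is to contradict the simultaneous conditions $y\neq 0$, $\an y=\mathfrak p_E$, and $y\in\ker\res_{E_0}$. The strategy is to expand $y=\sum_j g_j f_j$ in the Kaplansky free basis $\{f_j\}$ over $k[x_1,\dots,x_r]$ and to use that $H^*(E_0)$ is module-finite (in fact free of known finite rank) over $k[\res_{E_0}(x_i)]$; the vanishing of $\res_{E_0}(y)$ then imposes a linear relation among the $\res_{E_0}(f_j)$ over $k[\res_{E_0}(x_i)]$, while the fact that $\mathfrak p_E=\an y$ is \emph{associated} (so in particular the $H^*(G)$-module $H^*(G)y$ has full support of dimension $\rk E$ inside an ideal whose radical is the height-$(\dim H^*(G)-r)$ prime $\mathfrak p_{E_0}$) provides the incompatibility. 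I expect this dimension/structural comparison to be the principal technical step of the proof; once established, for every $w\in\mathfrak p_E$ the relation $\res_{E_0}(w)\,\res_{E_0}(y)=0$ together with $\res_{E_0}(y)\notin\text{Rad}$ forces $w\in\mathfrak p_{E_0}$, yielding $\mathfrak p_E\subseteq\mathfrak p_{E_0}$ and completing the proof via Theorem~\ref{Qu}.
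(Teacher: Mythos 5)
There are two genuine gaps here. First, your contrapositive is set up incorrectly: from ``$E_0\not\subseteq E$ (with $E\subseteq P$)'' you cannot conclude that $E_0$ is not conjugate in $G$ to a subgroup of $E$, and that stronger statement is exactly what the corollary to Quillen's theorem requires in order to produce $\tau_E\in\frak p_E\setminus\frak p_{E_0}$. If $E_0$ happened to be $G$-conjugate into $E$ without being contained in it, then $\frak p_E\subseteq\frak p_{E_0}$ by Theorem \ref{Qu} and no such $\tau_E$ exists. The paper avoids this by first transferring the problem to the Sylow subgroup: since $\res:H^*(G)\to H^*(P)$ is a monomorphism, Theorem \ref{non} lets one assume (after conjugation) that $\frak p_E^P\in\as H^*(P)$, and the argument then runs inside $P$, one central element $x$ of order $p$ at a time --- there the subgroup $(x)$ is fixed by $P$-conjugation, so ``$x\notin E$'' really is equivalent to ``$(x)$ not $P$-conjugate to a subgroup of $E$.'' Your argument, carried out in $G$ with all of $E_0$ at once, has no such escape, and your closing claim that ``$E_0$ conjugate to a subgroup of $E$ together with $E\subseteq P$ gives $E_0\subseteq E$'' is likewise asserted without justification.

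Second, and more decisively, your ``Extracting the contradiction'' step is a plan rather than a proof: the dimension/structural incompatibility you describe is never established. It is also an unnecessary detour. Once you know $\res_{E_0}(\tau_E)$ is a non-zero-divisor in $H^*(E_0)$, you should apply Duflot's theorem (Theorem \ref{Duf}, or its corollary for $C\subseteq Z(P)\subseteq P\subseteq G$) to the one-element sequence $\{\tau_E\}$: it is then a non-zero-divisor in $H^*(P)$ (respectively $H^*(G)$), which contradicts $\tau_E\,y=0$ with $y\neq 0$ on the spot. That is the entire content of the paper's proof. Your Step 1 (the length-$r$ regular sequence and the Kaplansky freeness of $H^*(G)$ over $k[x_1,\dots,x_r]$) is never actually used, and passing to the weaker conclusion $\res_{E_0}(y)=0$ discards precisely the leverage that Duflot's theorem provides.
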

\begin{proof} Since $\res:H^*(G)\to H^*(P)$ is a $H^*(G)$ monomorphism, by \ref{non}, $$\mc Ass H^*(G)\sub\{\frak p^P_E\cap H^*(G)\vv \frak p^P_E\in\as H^*(P)\}.$$  Hence if $\frak p_E\in\as(G)$ there exists $E'\in \mc A(P)$
with $$\frak p_E=\frak p^P_{E'}\cap H^*(G)=\frak p_{E'}, \text{ and } \frak p^P_{E'}\sub \as\,H^*(P).$$ 
Since $\frak p_E=\frak p_{E'}$ they are conjugate by Quillen, \ref{Qu}.  That is we may always assume that up to conjugation if $\frak p_E\sub \as\, H^*(G)$, $\frak p^P_E\in\as H^*(P)$.  

Let $x\in Z(P)$ be an central element of order $p$ of $P$ and suppose $x\notin E$. Then $E'=(x)$ is not conjugate in $P$ to a subgroup of $E$ and so there exists $\tau\in\frak p^P_E\sub H^*(P)$ with $\tau\notin\frak p^P_{E'}$.  Since $\frak p^P_{E'}=\ker\{H^*(P)\to H^*(E')/\text{Rad}\}$,
$\text{res}^P_{E'}(\tau)\in H^*(E')$ is not nilpotent and therefore a non-zero divisor since $H^*(E')=P[\beta\,\bold u]\otimes \Lambda[\bold u]$. By Duflot's Theorem, \ref{Duf}, $\tau\in H^*(P)$ is a non zero-divisor. But $\tau\in\frak p^P_E=\text{ann}_{H^*(P)}(y)$ for some $0\neq y\in H^*(P)$.  Thus $\tau\,y=0$ and this is impossible.  Thus every central element of order $p$ is contained in $E$ and since $E_0$ is generated by elements of order $p$,  $E_0\sub E$.\end{proof}

\begin{thm}(Notbohm,\,\cite{N})\label{Not} $\dph H^*(G)=\min\{\dph H^*(C_G(E))\vv E\in\mathcal AG\}$.\end{thm}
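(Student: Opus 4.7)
The plan is to prove the equality by establishing the two opposite inequalities, only one of which is substantive. The nontrivial direction is already quoted in the introduction and attributed to Notbohm.

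For the inequality $\min_{E \in \mathcal{A}G} \dph H^*(C_G(E)) \le \dph H^*(G)$, I would simply observe that the trivial subgroup $1$ is an element of $\mathcal{A}G$ (namely the elementary abelian $p$-subgroup of rank zero) and that $C_G(1) = G$. Hence the minimum on the left is bounded above by the value of its argument at $E = 1$, which is $\dph H^*(G)$. This half is essentially free.

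For the inequality $\dph H^*(G) \le \min_{E \in \mathcal{A}G} \dph H^*(C_G(E))$, I would invoke the Notbohm inequality $\dph H^*(C_G(E)) \ge \dph H^*(G)$, valid for every $E \in \mathcal{A}G$ and already quoted verbatim in the introduction. Taking the minimum over $E$ on the right-hand side yields the desired bound, and the two inequalities combine to give equality.

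If one wished to reprove the Notbohm inequality from scratch rather than cite it, the natural starting point would be that $H^*(C_G(E))$ is a finitely generated $H^*(G)$-module via restriction (Theorem \ref{Ev}(2)), together with the fact that $E \subseteq Z(C_G(E))$ so that Duflot's Theorem \ref{Duf} applies inside $H^*(C_G(E))$. The main obstacle is that $H^*(G)$ is not in general a direct summand of $H^*(C_G(E))$ as an $H^*(G)$-module, so no elementary splitting argument is available; one is forced either to compare local cohomology with respect to the two homogeneous maximal ideals (which share a radical in $H^*(C_G(E))$ by the finiteness just cited) or to appeal to Lannes' $T$-functor machinery, which is essentially Notbohm's route.
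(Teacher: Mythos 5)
The paper gives no proof of this theorem at all: it is stated as a citation to Notbohm, and the substantive inequality $\dph H^*(C_GE)\geq\dph H^*(G)$ is likewise only quoted in the introduction. So your proposal cannot diverge from "the paper's argument" -- there isn't one -- and what you have written is a correct formal reduction of the equality to the cited one-sided inequality. The one point to tighten is the easy direction. You take the minimum at $E=1$, which works only if the trivial subgroup is admitted into $\mathcal AG$; Notbohm's theorem (and Quillen's category) is usually stated over \emph{nontrivial} elementary abelian $p$-subgroups, and since the paper assumes $p$ divides $|G|$ the set $\mathcal AG$ is nonempty either way, but your argument would then need a nontrivial witness. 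The paper itself supplies one in part (3) of the Proposition on $\mc B$: take $E_0$ the maximal elementary abelian subgroup of the center of a Sylow $p$-subgroup; then $P\sub C_GE_0$, so $[G:C_GE_0]$ is prime to $p$, the transfer makes $H^*(G)$ an $H^*(G)$-module direct summand of $H^*(C_GE_0)$, and hence $\dph H^*(C_GE_0)\leq\dph H^*(G)$. Substituting $E_0$ for the trivial subgroup makes your upper bound unconditional. Your closing remarks on the hard direction are accurate -- no splitting argument is available for general $E$, and one really does need local cohomology or Lannes' $T$-functor as in Notbohm's paper -- but as written they are a sketch of a citation, not a proof, which is exactly the status the paper itself assigns to this result.
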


\begin{thm}(Carlson,\,\cite{C}) Let $0\neq x\in H^*(G)$. If $\text{res}\,^G_E(x)=0$ for all $E\in\mathcal AG^s$, then $\dim H^*(G)/\text{ann}\,x< s$ and therefore there exists an associated prime $\frak p_E$ of $\dim<s$.\end{thm}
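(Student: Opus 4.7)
The plan is to combine the Serre--Wilkerson classification of associated primes with Quillen's dimension computation. Via the embedding $H^*(G)/\an x\cong xH^*(G)\hookrightarrow H^*(G)$, $\as(H^*(G)/\an x)\subseteq\as(H^*(G))$, and by Serre--Wilkerson every such prime has the form $\mathfrak{p}_E$. Since $H^*(G)/\mathfrak{p}_E\hookrightarrow H^*(E)/\text{Rad}$ is a finite extension (Theorem~\ref{Ev}) of a polynomial ring of dimension $\rk E$, $\dim H^*(G)/\mathfrak{p}_E=\rk E$. Because minimal primes over $\an x$ are associated, the second conclusion follows once the first is established, and the first reduces to showing $\rk E<s$ for every associated prime $\mathfrak{p}_E$ of $H^*(G)/\an x$.

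I would argue by contradiction. Suppose $\mathfrak{p}_{E'}=\an_{H^*(G)}(w)$ with $w=xy\neq 0$ and $\rk E'\geq s$, and set $\xi:=\pi\res^G_{E'}(w)\in H^*(E')/\text{Rad}$, a polynomial in $\rk E'$ variables. For every rank-$s$ subgroup $E\subseteq E'$, the hypothesis gives $\res^G_E(w)=\res^G_E(x)\res^G_E(y)=0$, so by commutativity of the restrictions $\xi$ vanishes on the $k$-rational subspace of $\mathrm{Spec}(H^*(E')/\text{Rad})\cong E'\otimes k$ corresponding to $E$. If one can conclude $\xi=0$, then $w\in\mathfrak{p}_{E'}=\an(w)$, so $w^2=0$; combining with the domain structure of $H^*(G)/\mathfrak{p}_{E'}$ and that $w$ becomes a socle element of the localization $H^*(G)_{\mathfrak{p}_{E'}}$ then contradicts $w\neq 0$.

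The main obstacle is the vanishing conclusion $\xi=0$ when $\rk E'>s$: a polynomial vanishing on every rank-$s$ $\F_p$-rational subspace of $E'\otimes k$ need not be identically zero, since the union of such subspaces is a proper subvariety. The natural way to address this is to use Quillen's stratification (Theorem~\ref{Qu}) to reduce to the case $\rk E'=s$, where $E=E'$ is the only rank-$s$ subgroup to consider and $\xi=0$ follows immediately from $\res^G_{E'}(w)=0$. This reduction is the heart of the argument.
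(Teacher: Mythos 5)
The paper offers no proof of this statement---it is quoted from Carlson---so I am judging your argument on its own terms, and it has a fatal gap at its final step. Your reduction to showing $\rk E'<s$ for every associated prime $\p{E'}=\an w$, $w=xy$, of $xH^*(G)$ is fine, and in the case $\rk E'=s$ you correctly obtain $\res^G_{E'}(w)=0$, hence $w\in\p{E'}=\an w$ and $w^2=0$. But ``$w\neq 0$, $w^2=0$, $\an w$ prime'' is not a contradiction: the ``domain structure'' of $H^*(G)/\p{E'}$ says nothing about $w$, which typically lies in the nilradical, and a socle element of the localization can perfectly well be nonzero. Concretely, take $G=E'=(\Z/p)^2$ with $p$ odd and $w=u_1u_2$ the product of the two exterior generators: then $\an w=(u_1,u_2)=\p{G}$ is prime, $w^2=0$, and $w\neq 0$. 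Worse, this same example refutes the statement as printed: with $s=1$ and $x=u_1u_2$ one has $\res^G_E(x)=0$ for every rank-one $E$ (a product of two degree-one classes restricts to a multiple of $u^2=0$ on a cyclic group), yet $\an x=(u_1,u_2)$ and $\dim H^*(G)/\an x=\dim k[y_1,y_2]=2\not<1$, and the unique associated prime has dimension $2$. (For $p=2$ use $x=u_1u_2(u_1+u_2)$.) So no proof of the literal statement can exist; Carlson's hypothesis is $\res_{C_GE}(x)=0$ for all $E$ of rank $s$---restriction to the \emph{centralizers}---which is also what the Poulsen corollary that follows actually requires. Your unexecuted ``reduction to $\rk E'=s$ via Quillen stratification'' is a second gap: Quillen's theorem controls which primes $\p{E'}$ can occur, not the ranks of those associated to $xH^*(G)$, and the example above is precisely a case with $\rk E'>s$ that cannot be reduced away.

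For the corrected statement the standard argument runs through the transfer rather than restriction alone. Frobenius reciprocity, $\text{tr}^G_{C_GE}(\res_{C_GE}(x)\cdot b)=x\cdot \text{tr}^G_{C_GE}(b)$, shows that $\res_{C_GE}(x)=0$ forces $\text{im}\,\text{tr}^G_{C_GE}\sub\an x$, so $\an x\supseteq\sum_{\rk E=s}\text{im}\,\text{tr}^G_{C_GE}$. The Benson--Carlson/Evens transfer theorem identifies the support variety of this sum of transfer ideals with the union of the Quillen strata coming from elementary abelian subgroups not conjugate into any $C_GE$ with $\rk E=s$; since every $E'$ of rank $\geq s$ centralizes a rank-$s$ subgroup of itself, all such strata have dimension $<s$, giving $\dim H^*(G)/\an x<s$. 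This is exactly where the centralizers are indispensable, and why a restriction-only approach cannot be patched.
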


\begin{cor}(Poulsen,\,\cite{P})\label{Poul} $\min\{\dim \frak p_E\vv \frak p_E\in\as H^*(G)\}$ equals the maximum $s$ such that
$$ \prod_{E\in\mc AG^s}\res_{C_GE}:H^*(G)\to \prod_{E\in\mc AG^s}H^*(C_G(E))\} $$ is a monomorphism. That is, the maximum $s$ with $$\bigcap_{E\in\mc AG^s}\ker\{\res_{C_GE}\}=(0).$$
\end{cor}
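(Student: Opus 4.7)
Let $\omega=\min\{\dim \frak p_E\vv \frak p_E\in\as H^*(G)\}$ and let $s^*$ denote the maximum $s$ for which $\bigcap_{E\in\mc AG^s}\ker\res_{C_GE}=(0)$. The plan is to show $\omega\le s^*$ directly from Carlson's theorem above, and $s^*\le\omega$ by combining Quillen's theorem \ref{Qu} with Duflot's theorem \ref{Duf} to manufacture a nonzero element of $\bigcap_{E\in\mc AG^s}\ker\res_{C_GE}$ whenever $s>\omega$.

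For $\omega\le s^*$ I would take $s=\omega$ and argue that a nonzero element $x$ of $\bigcap_{E\in\mc AG^\omega}\ker\res_{C_GE}$ would in particular lie in $\bigcap_{E\in\mc AG^\omega}\ker\res_E$, since $\res^G_E$ factors through $C_GE$, so $\ker\res_{C_GE}\sub\ker\res_E$. But Carlson's theorem applied to such an $x$ produces an associated prime of $H^*(G)$ of dimension strictly less than $\omega$, contradicting the definition of $\omega$. Hence $s=\omega$ is admissible and $s^*\ge\omega$.

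For $s^*\le\omega$, fix $s>\omega$ and choose an associated prime $\frak p_{E_0}$ with $\dim \frak p_{E_0}=\omega$; write $\frak p_{E_0}=\an y$ for some nonzero homogeneous $y\in H^*(G)$. For any $E\in\mc AG^s$, $E$ cannot be conjugate to a subgroup of $E_0$ (its rank is too large), so by \ref{Qu} we have $\frak p_{E_0}\not\sub\frak p_E$; choose a homogeneous $z\in\frak p_{E_0}\setminus\frak p_E$, so that $zy=0$. Then $\res_E(z)\in H^*(E)$ is non-nilpotent (since $z\notin\frak p_E$) and hence a non-zero-divisor there, by the same structural argument used in the proof of Proposition \ref{E0}. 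Applying Duflot \ref{Duf} to the central inclusion $E\sub C_GE$ upgrades this to the statement that $\res_{C_GE}(z)$ is a non-zero-divisor in $H^*(C_GE)$; together with $\res_{C_GE}(z)\cdot\res_{C_GE}(y)=0$ this forces $\res_{C_GE}(y)=0$. Thus $y$ is a nonzero common kernel element, so $s$ is inadmissible and $s^*\le\omega$.

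The main obstacle, I expect, is the second direction: one must realize that an element $z\in\frak p_{E_0}\setminus\frak p_E$, produced by pure formalism from Quillen, can be parlayed via Duflot into a regular element of $H^*(C_GE)$ that forces the desired vanishing. The first direction is essentially a bookkeeping exercise once Carlson's theorem is available.
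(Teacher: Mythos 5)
Your proof is correct, and it assembles exactly the ingredients the paper intends: the inequality $s^*\ge\omega$ is the contrapositive of Carlson's theorem (using $\ker\res_{C_GE}\sub\ker\res_E$), while the inequality $s^*\le\omega$ is the Quillen-plus-Duflot argument that the paper itself runs almost verbatim inside the proof of Theorem \ref{Sch} (choose $\tau\in\frak p_{E_0}\setminus\frak p_{E}$, note $\res_E\tau$ is a non-zero-divisor in $H^*(E)$, upgrade via Duflot \ref{Duf} to a non-zero-divisor in $H^*(C_GE)$, and conclude $\res_{C_GE}y=0$). The paper gives no separate proof of this corollary, deferring to Poulsen, so your write-up in fact supplies the missing details and does so correctly.
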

 
\section*{Poset of Possible Associated Primes}

Recall $\mc AG$ the poset of conjugacy classes of elementary abelian $p$-subgroups of $G$
 where $E\leq E'$ if and only if $E$ is conjugate to a subgroup of $E'$.

Because of the Proposition \ref{E0} and Notbohm's Theorem, \ref{Not} we make the following definitions.  Let $P$ a Sylow $p$-subgroup of $G$ and $E_0$ the maximal elementary $p$-subgroup of the center of $P$.

\begin{defn}(1) PAP=$\{E\vv E_0\sub E\}\sub\mc AG$.
\par (2) AP is the subposet of PAP consisting of $\{E\in PAP\vv \p E\in\as H^*(G)\}$.
\par (3) $\mc B=\{E\in\mc AG\vv \dph H^*(G)=\dph H^*(C_GE)\}$.
\par (4) If $\mc C\sub \mc  AG$ is a subposet, let $\dim\,\mc C=\max\{\rk E\vv E\in\mc C\}$ and $\omega_\mc C=\min\{\rk E\vv E\in\mc C\}$.  If $\mc C=$AP then $\omega_\mc C=\omega_G$.
\par (5) If $\mc C\sub\mc AG$, denote $\{E\in\mc C\vv \rk E=s\}$ by $\mc C^s$.
\end{defn}

\begin{prop} Let $E\in\mc AG$.
\begin{enumerate}\item If $E\in\mc B$ then $\rk E\leq\dph H^*(G)\leq\omega _G$. That is $\dim\,\mc B\leq\dph H^*(G)$.
\item If $E\in\mc B$ and $E'\sub E$ then $E'\in\mc B$.
\item If $E_0$ is the maximal elementary subgroup of the center of a Sylow $p$-subgroup of $G$, then $E_0\in \mc B$.
\end{enumerate}\end{prop}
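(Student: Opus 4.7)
The plan is to treat the three parts separately, in order. Parts (1) and (2) will follow directly from Duflot and Notbohm, while part (3) is the main step and requires a transfer argument.

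For (1), I would invoke the Duflot-type bound (the corollary after Theorem \ref{Duf}, $\dph H^*(H)\ge$ the $p$-rank of the center of a Sylow $p$-subgroup of $H$) applied to the group $C_G E$ in place of $G$. Since $E$ is central in $C_G E$, any Sylow $p$-subgroup $Q$ of $C_G E$ containing $E$ has $E\sub Z(Q)$, so the maximal elementary abelian in $Z(Q)$ has rank at least $\rk E$, whence $\dph H^*(C_G E)\ge\rk E$. Since $E\in\mc B$ this gives $\rk E\le\dph H^*(G)$, and Serre's inequality $\dph H^*(G)\le\omega_G$ from the introduction completes the chain.

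For (2), I plan a sandwich argument on $\dph H^*(C_G E')$. The lower bound $\dph H^*(G)\le\dph H^*(C_G E')$ is Notbohm (\ref{Not}) applied to $G$. For the upper bound, $E'\sub E$ forces $E$ to centralize $E'$, so $E\in\mc A(C_G E')$; applying Notbohm to the group $C_G E'$ at this elementary abelian $E$ yields
\[
\dph H^*(C_G E')\le\dph H^*(C_{C_G E'}(E))=\dph H^*(C_G E)=\dph H^*(G),
\]
using $E\in\mc B$ in the last equality.

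Part (3) is the main step. My plan is a transfer argument combined with a depth comparison for finite extensions. Since $E_0\sub Z(P)$, every element of $P$ centralizes $E_0$, so $P\sub C_GE_0$ and $P$ is also a Sylow $p$-subgroup of $C_GE_0$; therefore $[G:C_GE_0]$ divides $[G:P]$ and is coprime to $p$. The standard transfer then splits $\res\colon H^*(G)\hookrightarrow H^*(C_GE_0)$ as a map of $H^*(G)$-modules, giving a decomposition $H^*(C_GE_0)=H^*(G)\oplus N$ of $H^*(G)$-modules. The $\operatorname{Ext}$ characterization of depth (the depth of a direct sum equals the minimum of the summand depths) then shows that the depth of $H^*(C_GE_0)$ as an $H^*(G)$-module is at most $\dph H^*(G)$; combined with Notbohm (\ref{Not}) this forces $E_0\in\mc B$, provided one can identify the depth of $H^*(C_GE_0)$ as an $H^*(G)$-module with $\dph H^*(C_GE_0)$ itself. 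This identification is the main obstacle; I plan to obtain it from Theorem \ref{Ev}, since $H^*(C_GE_0)$ is finitely generated over $H^*(G)$ and every positive-degree element of $H^*(C_GE_0)$ is integral over $H^*(G)$, so the homogeneous maximal ideal of $H^*(C_GE_0)$ is the radical of its expansion from that of $H^*(G)$, and the two depths coincide by standard $*$-local commutative algebra.
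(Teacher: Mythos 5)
Your proposal is correct and follows essentially the same route as the paper: Duflot's bound applied to $C_GE$ for (1), the double application of Notbohm via $C_GE=C_{C_GE'}(E)$ for (2), and the transfer splitting of $H^*(G)\hookrightarrow H^*(C_GE_0)$ combined with Notbohm for (3). The only difference is that you spell out the standard identification of the depth of $H^*(C_GE_0)$ over $H^*(G)$ with its intrinsic depth, which the paper leaves implicit.
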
 
\begin{proof}(1) If $\dim H^*(G)=\dph H^*(G)$ the result is trivial.  If $\dph H^*(G)<\rk E$ then $\dph H^*(C_GE)\geq \rk E>\dph H^*(G)$ by Duflot's theorem and therefore $E\notin\mc B$.
\par (2) $E'\sub E$ implies $E\sub C_GE\sub C_G(E')$. It is clear that $C_GE=C_{C_GE'}E$ and so by using Notbohm's theorem twice we have
\begin{equation*}\begin{split} \dph H^*(G)&\leq \dph H^*(C_GE')\leq\dph H^*(C_{C_GE'}E)\\&=\dph H^*(C_GE)=\dph H^*(G)\end{split}\end{equation*}
and therefore $E'\in\mc B$.
\par (3) $P\sub C_GE_0$. Since $[G:C_GE_0]$ is relatively prime to $p$, $H^*(G)$ is an $H^*(G)$ direct summand of $H^*(C_GE_0)$ and therefore $\dph H^*(C_GE_0)\leq\dph H^*(G)$.  But since $\dph H^*(G)\leq\dph H^*(C_GE)$ for all elementary abelian subgroups $E$ of $G$, by Notbohm's theorem we have $E_0\in\mc B$.\end{proof}

\begin{cor} If $E\in\text{AP}\cap\mc B$ then $\rk E=\omega_G=\dph H^*(G)$.\end{cor}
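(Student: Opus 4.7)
The plan is to sandwich $\rk E$ between $\omega_G$ and $\dph H^*(G)$, forcing equality throughout. The three needed ingredients are already in place: the definition of $\omega_G$ as the minimum rank over AP gives a lower bound on $\rk E$, the preceding proposition gives a matching upper bound from membership in $\mc B$, and Serre's classical inequality closes the gap from above.

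For the lower bound, I would observe that $E\in\text{AP}$ means $\p E$ is an associated prime of $H^*(G)$, so $E$ is one of the elements over which the minimum defining $\omega_G=\omega_{\text{AP}}$ is taken in Definition~3.2(4); hence $\rk E\geq\omega_G$. Implicit here is the identification $\dim H^*(G)/\p E=\rk E$ coming from Quillen's computation of the variety, which is already being used freely in this section.

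For the upper bound, I would invoke part~(1) of the preceding proposition, which gives $\rk E\leq\dph H^*(G)$ whenever $E\in\mc B$. Combining this with Serre's inequality $\dph H^*(G)\leq\omega_G$---applied to the $*$-local $k$-algebra $H^*(G)$ as recalled in the Introduction---yields
$$\omega_G\leq\rk E\leq\dph H^*(G)\leq\omega_G,$$
so all three quantities coincide, proving $\rk E=\omega_G=\dph H^*(G)$.

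I do not anticipate any real obstacle. The corollary is essentially a bookkeeping consequence of having set up AP and $\mc B$ in compatible ways: the lower bound from being an associated prime of minimum-rank type and the upper bound from the depth-equality condition meet exactly when Serre's bound is tight, and this is precisely the content of being in both AP and $\mc B$.
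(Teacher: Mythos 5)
Your proof is correct and follows essentially the same sandwich argument as the paper: both establish $\rk E\geq\omega_G$ from $E\in\mathrm{AP}$ and then close the cycle of inequalities using Serre's bound $\dph H^*(G)\leq\omega_G$ together with the Duflot/$\mc B$ bound $\rk E\leq\dph H^*(G)$ (which the paper writes out as $\dph H^*(G)=\dph H^*(C_GE)\geq\rk E$ rather than citing part~(1) of the proposition). No issues.
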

\begin{proof} If $E\in$ AP, 
$$\rk E\geq \omega_G\geq \dph H^*(G)=\dph H^*(C_GE)\geq \rk E.$$ \end{proof}




For any finite group $G$, let $z_G$ denote the Duflot bound for $G$, that is if $P$ is a Sylow $p$-subgroup of $G$
$$ z_G=\max\{\rk E\vv E\,\text{elementary abelian}\,\sub\,\text{center}\,P\}.$$

\noindent Duflot's result states $\dph H^*(G)\geq z_G$.

\begin{thm}\label{Sch} Let $G$ be a finite group and $E\in AP^{\omega_G}$, that is $\p E\in\as H^*(G)$ and $\rk E=\omega_G$. Then
$$z_{C_GE}=\omega_G=\omega_{C_G(E)}=\dph H^*(C_GE).$$ In particular, $C_GE$ satisfies the Connor conjecture for any $E\in AP^{\omega_G}$.\end{thm}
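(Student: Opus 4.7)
The plan is to sandwich everything between easy lower bounds and prove one nontrivial upper bound, so that all four quantities are forced to be $\omega_G$.

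First I would record the chain
$$\omega_{C_GE}\ \ge\ \dph H^*(C_GE)\ \ge\ z_{C_GE}\ \ge\ \omega_G,$$
which is essentially automatic. The first inequality is Serre's bound for the $*$-local $k$-algebra $H^*(C_GE)$; the second is Duflot (Theorem \ref{Duf}); and the third holds because $E\sub C_GE$ is elementary abelian and, being central in $C_GE$, lies in the center of some Sylow $p$-subgroup of $C_GE$, giving $z_{C_GE}\ge\rk E=\omega_G$. So everything reduces to proving $\omega_{C_GE}\le\omega_G$, which I would do by showing that
$$\frak P_E=\ker\{\res^{C_GE}_E:H^*(C_GE)\to H^*(E)/\text{rad}\}$$
is an associated prime of $H^*(C_GE)$; by Quillen $\dim \frak P_E=\rk E=\omega_G$, which finishes the theorem.

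To produce $\frak P_E$ as an associated prime, choose $y\in H^*(G)$ with $\an\,y=\frak p_E$ and set $z=\res^G_{C_GE}(y)$. Since $y\notin\frak p_E=\ker\{H^*(G)\to H^*(E)/\text{rad}\}$, the image of $y$ in $H^*(E)/\text{rad}$ is nonzero, so $z\ne 0$ and moreover $\overline{\res^G_E(y)}\ne 0$ in the polynomial ring $H^*(E)/\text{rad}$. Let $I=\an_{C_GE}(z)$. For $\sigma\in I$, restricting $\sigma z=0$ to $E$ and reducing mod the radical yields $\overline{\res^{C_GE}_E(\sigma)}\cdot \overline{\res^G_E(y)}=0$ in a polynomial ring, forcing $\sigma\in\frak P_E$. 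Hence $I\sub\frak P_E$, so $\dim H^*(C_GE)/I\ge\omega_G$.

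The other direction, and the step I expect to be the main obstacle, is the matching upper bound $\dim H^*(C_GE)/I\le\omega_G$. For this I would pass to $H^*(G)$: any $a\in\frak p_E^G=\an\,y$ satisfies $ay=0$, and applying $\res$ gives $\res(a)\cdot z=0$, so $\frak p_E^G\sub\res^{-1}(I)$. By Theorem \ref{Ev}, $H^*(C_GE)$ is finite over $H^*(G)$, hence $H^*(C_GE)/I$ is integral over $H^*(G)/\res^{-1}(I)$; integrality preserves Krull dimension, so
$$\dim H^*(C_GE)/I\ =\ \dim H^*(G)/\res^{-1}(I)\ \le\ \dim H^*(G)/\frak p_E^G\ =\ \omega_G.$$
Combining the two bounds gives $\dim H^*(C_GE)/I=\dim H^*(C_GE)/\frak P_E=\omega_G$, which forces the prime $\frak P_E$ to be minimal over $I$. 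Minimal primes over annihilators are associated, so $\frak P_E\in\as H^*(C_GE)/I\sub\as H^*(C_GE)$. This yields $\omega_{C_GE}\le\omega_G$, the chain collapses, and in particular $\dph H^*(C_GE)=\omega_{C_GE}$ is the Carlson (``Connor'') conjecture for $C_GE$.
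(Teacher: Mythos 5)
There is a genuine gap at the very first step of your construction of the associated prime. You write: ``Since $y\notin\frak p_E=\ker\{H^*(G)\to H^*(E)/\text{rad}\}$, the image of $y$ in $H^*(E)/\text{rad}$ is nonzero, so $z\ne 0$.'' But nothing guarantees $y\notin\frak p_E$. The element $y$ is only required to satisfy $\an y=\frak p_E$, and an element can perfectly well lie in its own annihilator: if $y$ is homogeneous of odd degree and $p$ is odd then $y^2=0$ by graded commutativity, so $y\in\an y=\frak p_E$. Already for $G=E=\Z/p$ ($p$ odd) the unique associated prime is $\an(e)=(e)$ with $e$ the exterior generator, and $e\in(e)$, so $\overline{\res_E(e)}=0$ in $H^*(E)/\text{rad}$. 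This false claim is used twice: once to get $z=\res_{C_GE}(y)\neq 0$, and again (via $\overline{\res_E^G(y)}\neq 0$ in the polynomial ring) to deduce $I=\an(z)\sub\frak P_E$. Both steps collapse without it. A telling symptom is that your argument for $z\neq 0$ never uses the hypothesis $\rk E=\omega_G$, which is exactly where the paper has to work hardest.

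The paper's proof repairs both points as follows. For $z\neq 0$: by the corollary to Quillen's theorem there is $\tau\in\frak p_E$ with $\tau\notin\frak p_{E'}$ for every $E'$ of rank $\omega_G$ not conjugate to $E$; then $\res_{E'}\tau$ is non-nilpotent, hence a non-zero-divisor in $H^*(E')$, hence $\res_{C_GE'}\tau$ is a non-zero-divisor in $H^*(C_GE')$ by Duflot, and $\tau y=0$ forces $\res_{C_GE'}y=0$ for all such $E'$. Poulsen's detection result (Corollary \ref{Poul}), which is precisely where minimality of $\rk E=\omega_G$ enters, then forces $\res_{C_GE}y\neq 0$. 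For the containment $I\sub\frak P_E$: one should not argue through $\res_E y$ at all, but instead observe (as the paper does in the following section) that since $E$ is central in $C_GE$, Duflot's theorem shows every element outside $\frak P_E$ is a non-zero-divisor on $H^*(C_GE)$, so every annihilator ideal, in particular $I$, is contained in $\frak P_E$. With those two repairs the remainder of your argument — the dimension bound $\dim H^*(C_GE)/I\leq\omega_G$ via integrality over $H^*(G)$ and $\frak p_E\sub\res^{-1}I$, and the conclusion that $\frak P_E$, being minimal over the annihilator $I$, is associated — is sound, and in fact proves slightly more than Theorem \ref{Sch} itself (it identifies the associated prime as $\frak P_E$, which the paper defers to a later theorem); Theorem \ref{Sch} only needs that $I$ is contained in \emph{some} associated prime.
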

\begin{proof} There exists $\tau\in\p E\sub H^*(G)$ such that $\tau\notin \p{E'}$ for all $E'$ of rank $s$ and $E'$ not conjugate to $E$ by the corollary to Quillen's theorem.  Hence $\res_{E'}\tau$ is not in the radical of $H^*(E')$ and therefore not a zero-divisor.  Hence $\res_{C_GE'}\tau$ is not a zero divisor in $H^*(C_GE')$.  If $\p E=\an\,y$ then $\tau\,y=0$ and so $\text{res}_{C_GE'}(\tau\,y)=0$.  Since 
$\text{res}_{C_GE'}\tau$ is not a zero divisor in $H^*(C_GE')$ we have $\res_{C_GE'}y=0$.
On the other hand, \ref{Poul}, since
 
$$\omega_G=\max_E\{t\vv 0=\bigcap_{\rk E=t}\,ker\{\text{res}:H^*(G)\to H^*(C_GE)\},$$
we must have $\text{res}_{C_GE}\,y\neq 0$.  Let $J=\an_{H^*(C_GE)}\, \res_{C_GE}\,y\sub H^*(C_GE)$.  Since $H^*(C_GE)$ is \fg as an $H^*(G)$-module, we see 
$\dim\,\text{res}^{-1}J= \dim\,J$.  
But $\text{res}^{-1}J\supseteq\text{ann}_{H^*(G)}\,y=\p E$.  Since $J$ is an annihilator ideal, it is contained in an associated prime of $H^*(C_GE)$ and we have
$$\begin{aligned} \rk E=s&=\dim \p E\geq \dim\,\text{res}^{-1}J= \dim\,J\\&\geq\omega_{C_GE}\geq\dph H^*(C_GE)\geq z_{C_GE}\geq \rk E.\end{aligned}$$
$z_{C_GE}\geq \rk E$ since $E\sub Z(\text{Syl}_pCG)$.  Hence $s=z_{C_GE}=\omega_{C_GE}=\dph H^*(C_GE,k)$.

Since $AP^{\omega_G}\neq\emptyset$, it follows that for any $E\in AP^{\omega_G}$, $C_GE$ satifies the Connor conjecture.
\end{proof}

\begin{thm} Let $G$ be a finite group.  The following are equivalent.
\begin{enumerate}\item $G$ satisfies the Carlson conjecture.
\item For all $E\in\text{AP}^{\omega_G}$, that is with $\rk E=\omega_G$ and $\p E\in\as H^*(G)$ then $$\dph H^*(G)=\dph H^*(C_GE), \text{ i.e. } E\in\mc B. $$
\item There exists $\p E\in\text{Spec }H^*(G)$ with $$\rk E=\omega_G\text{ and }\dph H^*(G)=\dph H^*(C_GE).$$ That is, $\mc B^{\omega_G}\neq\emptyset$, equivalently $\dim\,\mc B=\omega_G$.
\end{enumerate}\end{thm}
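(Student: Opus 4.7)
The plan is to prove the cycle $(1)\Rightarrow(2)\Rightarrow(3)\Rightarrow(1)$, leveraging Theorem~\ref{Sch} for the first two implications and closing with the Duflot bound together with the Serre inequality $\dph H^*(G)\leq\omega_G$. I interpret the Carlson conjecture as the equality $\dph H^*(G)=\omega_G$; the $\leq$ direction is automatic from Serre in the $*$-local setting, so the substantive content of (1) is the reverse inequality.

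For $(1)\Rightarrow(2)$, fix $E\in AP^{\omega_G}$. Theorem~\ref{Sch} supplies $\dph H^*(C_GE)=\omega_G$ and the Carlson hypothesis supplies $\dph H^*(G)=\omega_G$, so the two depths coincide and $E\in\mc B$. For $(2)\Rightarrow(3)$, I first record that $AP^{\omega_G}$ is non-empty: by definition of $\omega_G$ there is at least one associated prime of $H^*(G)$ of minimum dimension, and by Serre--Wilkerson such a prime is of the form $\p E$, with $\dim \p E=\rk E$ (via Quillen, Theorem~\ref{Qu}); hence $\rk E=\omega_G$. Any such $E$ lies in $\mc B$ by (2), so $E\in\mc B^{\omega_G}$ and $\mc B^{\omega_G}\neq\emptyset$.

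For $(3)\Rightarrow(1)$, take $E\in\mc B^{\omega_G}$, so $\rk E=\omega_G$ and $\dph H^*(G)=\dph H^*(C_GE)$. Since $E$ is central in $C_GE$, any Sylow $p$-subgroup of $C_GE$ containing $E$ has $E$ in its center, and therefore $z_{C_GE}\geq \rk E=\omega_G$. Duflot's theorem then yields $\dph H^*(C_GE)\geq z_{C_GE}\geq\omega_G$, which together with Serre's upper bound $\dph H^*(G)\leq\omega_G$ forces $\dph H^*(G)=\omega_G$.

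The substantive work has already been packaged into Theorem~\ref{Sch}; what remains is bookkeeping. The one step deserving attention is $(3)\Rightarrow(1)$: the witness $E\in\mc B^{\omega_G}$ is not required to satisfy $\p E\in\as H^*(G)$, so one cannot simply route back through Theorem~\ref{Sch}. Instead, the needed lower bound $\dph H^*(C_GE)\geq\omega_G$ has to be extracted directly from the centrality of $E$ in $C_GE$ via Duflot, which is precisely what makes the cycle close without a priori knowing that $\p E$ is associated.
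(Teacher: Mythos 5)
Your proposal is correct and follows essentially the same route as the paper: $(1)\Rightarrow(2)$ via Theorem~\ref{Sch}, $(2)\Rightarrow(3)$ from the non-emptiness of $AP^{\omega_G}$, and $(3)\Rightarrow(1)$ from the Duflot bound $\dph H^*(C_GE)\geq z_{C_GE}\geq\rk E$ (since $E\sub Z(C_GE)$) combined with Serre's inequality $\dph H^*(G)\leq\omega_G$. Your explicit remark that the witness in (3) need not give an associated prime, so the lower bound must come from centrality rather than Theorem~\ref{Sch}, is exactly the point the paper's one-line argument for $(3)\Rightarrow(1)$ relies on.
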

\begin{proof}

\par (1) $\Rightarrow (2)$  Let $\p E\in\as H^*(G)$ with $\rk E=\omega_G$. By the previous theorem
$$ \omega_G=\omega_{C_G(E)}=\dph H^*(C_GE).$$

\noindent By Notbohm's theorem $\dph\, H^*(C_GE)\geq \dph H^*(G)$  and therefore if $\omega_G=\dph H^*(G)$ we have
$$\dph H^*(G)=\omega_G=\dph H^*(C_GE)\geq \dph H^*(G)$$ and therefore $\dph H^*(G)=\dph H^*(C_GE)$.

\medskip (2)$\Rightarrow$ (3) Since $\text{AP}^{\omega_G}\neq\emptyset$ by definition and $\mc B\cap\text{AP}\sub\mc B$.  

\medskip (3) $\Rightarrow$ (1)  $\rk E=\omega_G\geq\dph H^*(G)=\dph H^*(C_GE)\geq \rk E$ by Duflot's theorem.  Therefore $\omega_G=\dph H^*(G)$.
\end{proof}

\begin{cor} Let $G$ be a finite group.  Then $G$ satisfies the Carlson conjecture if and only if $\dph H^*(G)=\dph 
\prod_{\text{rk }E=\omega_G}H^*(C_GE)$.\end{cor}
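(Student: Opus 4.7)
The plan is to reduce this to the equivalence (1) $\Leftrightarrow$ (3) of the previous theorem, after unpacking what the depth of the product means. The key input I need is the standard fact that for finitely many finitely generated modules over a $*$-local graded $k$-algebra, depth distributes over direct sums as a minimum: $\dph(M_1\oplus\cdots\oplus M_r)=\min_i\dph M_i$. Applied to the (finite) collection of $H^*(G)$-modules $H^*(C_GE)$ indexed by conjugacy classes of rank-$\omega_G$ elementary abelian subgroups, this gives
$$\dph \prod_{\rk E=\omega_G}H^*(C_GE)=\min_{\rk E=\omega_G}\dph H^*(C_GE).$$

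Next I would apply Notbohm's Theorem \ref{Not}, which yields $\dph H^*(G)\le \dph H^*(C_GE)$ for every $E\in\mc AG$. Taking the minimum over the (nonempty, finite) set of rank-$\omega_G$ classes, and combining with the previous display, gives the one-sided inequality
$$\dph H^*(G)\le \dph \prod_{\rk E=\omega_G}H^*(C_GE),$$
valid unconditionally. The equality in the statement therefore holds exactly when the minimum on the right is attained at the value $\dph H^*(G)$, i.e.\ when there exists some $E$ with $\rk E=\omega_G$ and $\dph H^*(C_GE)=\dph H^*(G)$. Because the $\p E$ associated to an elementary abelian subgroup is automatically prime, this is precisely condition (3) of the preceding theorem, namely $\mc B^{\omega_G}\ne\emptyset$.

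Finally, the previous theorem establishes (1) $\Leftrightarrow$ (3), so the Carlson conjecture for $G$ is equivalent to $\mc B^{\omega_G}\ne\emptyset$, which by the above is equivalent to $\dph H^*(G)=\dph\prod_{\rk E=\omega_G}H^*(C_GE)$. This closes the equivalence.

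There is no real obstacle; the only substantive point is the product-of-depths identity, which is a standard computation (regular sequences on a finite direct sum are precisely the sequences regular on each summand, from which taking a maximal such sequence gives the minimum). Everything else is bookkeeping that already sits inside the previous theorem.
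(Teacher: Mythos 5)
Your proposal is correct and matches the paper's intended argument: the paper leaves the corollary's proof to the observations that immediately follow it, which invoke exactly your two ingredients — that the depth of a finite product is the minimum of the depths of the factors, and the equivalence $(1)\Leftrightarrow(3)$ of the preceding theorem via Notbohm's inequality $\dph H^*(G)\leq\dph H^*(C_GE)$. No gaps.
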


\noindent Observations: 1) $H^*(G)$ is a subring of $\prod_{\text{rk }E=\omega_G}H^*(C_GE).$ $\prod_{\text{rk }E=\omega}H^*(C_GE)$ is finitely generated as an $H^*(G)$-module and the dimension of a finite product of rings is the maximum of the dimensions of the factors, we have $$\dim H^*(G)=\max\{\dim H^*(C_GE)\,\vert\, E\in \mc AG^\omega\}.$$
\par 2) It is true that if $R=\prod_1^nR_i$ is a finite product then $\dph R=\min\{\dph R_i\}$. It is not clear that 
$$\dph H^*(G)=\min\{\dph H^*(C_GE)\,\vert\, \text{rk }E=\omega_G\}.$$
although Magma seems to say it is so.

\begin{remark}1) $\text{AP}^{\omega_G}\subsetneq\mc B\subsetneq \mc A^{\omega_G}$. $G=SG[32,6]$ has (dim, depth)$\,(3,2)$. It has $6$ elementary abelian subgroups of rank $2$ with centralizers isomorphic to the the small groups $\{(16,11),(16,3),(8,5)\}$ with (dim,depth)$\,=\{(3,3),(3,2),(2,2)\}$. The rank $2$ elementary abelian subgroup $E$ ($L!10)$ whose centralizer is isomorphic to SG(8,5) is not an associated prime according to a calculation in Macaulay2. Therefore  $\text{AP}^{\omega_G}\subsetneq\mc B$. On the other hand the rank $2$ elementary abelian subgroup whose centalizer is isomorphic to SG(16,11) is not in $\mc B$.  Hence $\mc B\subsetneq \mc A^{\omega_G}$.

\par 2) It is true for $G$ Cohen-Macaulay that 
$\dph H^*(C_GE)=\dph H^*(G)$ and $\dim H^*(C_GE)=\dim H^*(G)$ for every $E\in \mc AG^\omega$ (see next corollary), these examples show that it is not true in general.  Moreover they show that although there exist $E\in\mc AG^\omega$ with $\dim H^*(C_GE)=\dim H^*(G)$ and may (probably) be possible to find $E'\in\mc AG^\omega$ with $\dph H^*(C_GE')=\dph H^*(G)$, it may or may not be possible to chose $E=E'$.

\par a) $G=SG(32,6)$ is of $(\text{dim,depth})=(3,2)$.  The centralizers of rank $2$ elementary abelian subgroups are isomorphic to $\{(8,5),(16,3),(16,11)\}$ of respective   $(\text{dim,depth})=\{(3,3),(3,2),(3,3)\}$.  

b) $G=SG(32,27)$ is of $(\text{dim,depth})=(4,3)$.  The centralizers of elementary abelian subgroups of rank $\omega_G$ are isomorphic to $\{(8,5),(16,14)\}$ of $(\text{dim,depth})=\{(3,3),(4,4)\}$ respectively.

c) $G=SG(32,43)$ is of $(\text{dim,depth})=(3,2)$.  The centralizers of rank $2$ elementary abelian subgroups are isomorphic to $\{(8,2),(8,5),(16,11)\}$ of respective $(\text{dim,depth})=\{(2,2),(3,3),(3,3)\}$.  

\end{remark}

\section*{Associated Primes of $H^*(C_GE)$}

In the following let $ res=\text{res}_{C_GE}:H^*(G)\to H^*(C_GE)$.

\begin{thm} Let $G$ be a finite group and $E\in AP^{\omega_G}$. Let $\frak p_E=\an y$, then
\begin{enumerate} 
\item $ ZD(H^*(C_GE))\sub\frak p^{C_GE}_E$ and therefore if $Q\in \text{Ass}\,H^*(C_GE)$, $Q\sub \frak p^{C_GE}_E$.
\item $res^{-1}\frak p^{C_GE}_E=\frak p^G_E$.
\item $\frak p^{C_GE}_E\in\as\,H^*(C_GE)$ and is therefore the unique maximal associated prime of $H^*(C_GE)$ (and of dimension $\omega_{C_GE}=\omega_G$).
\end{enumerate}
\end{thm}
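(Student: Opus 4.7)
The plan is to prove the three items in sequence, using (1) as the key tool for both (2) and (3).

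For (1) I would invoke Duflot. Because $E\sub C_GE$ is central, Theorem \ref{Duf} applies to the inclusion $E\sub C_GE$. In $H^*(E)$ (a tensor product of a polynomial algebra with an exterior algebra, or just a polynomial algebra when $p=2$) every non-nilpotent element is a non-zero-divisor; so if $x\in H^*(C_GE)$ satisfies $\text{res}^{C_GE}_E x\notin\text{Rad}\, H^*(E)$ then $\text{res}^{C_GE}_E x$ is a non-zero-divisor in $H^*(E)$, and Duflot upgrades this to $x$ being a non-zero-divisor in $H^*(C_GE)$. By contraposition $ZD(H^*(C_GE))\sub\p E^{C_GE}$; since the set of zero-divisors of a Noetherian ring equals the union of its associated primes, every $Q\in\as H^*(C_GE)$ lies in $\p E^{C_GE}$. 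Part (2) then drops out of the transitivity $\text{res}^G_E=\text{res}^{C_GE}_E\circ\text{res}^G_{C_GE}$: an element of $H^*(G)$ lies in $\p E^G$ exactly when its image in $H^*(C_GE)$ lies in $\p E^{C_GE}$.

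For (3), the main content, I would revisit the computation inside the proof of Theorem \ref{Sch}. Writing $J=\an_{H^*(C_GE)}(\text{res}\, y)$, that proof already establishes $\dim J=\omega_G$. Since $\text{res}\, y\neq 0$, the cyclic submodule $H^*(C_GE)\cdot\text{res}\, y\cong H^*(C_GE)/J$ has an associated prime $Q\in\as H^*(C_GE)$ containing $J$, so $\dim Q\leq\dim J=\omega_G$; on the other hand $\dim Q\geq\omega_{C_GE}=\omega_G$ by Theorem \ref{Sch}. Hence $\dim Q=\omega_G$. Part (1) places $Q\sub\p E^{C_GE}$, and since $\dim\p E^{C_GE}=\rk E=\omega_G=\dim Q$ while $H^*(C_GE)$ is catenary (a finitely generated graded $k$-algebra), the inclusion is equality: $\p E^{C_GE}=Q\in\as H^*(C_GE)$. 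Maximality among associated primes is then inherited from (1).

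The only input not already provided is the dimension computation $\dim\p E^{C_GE}=\rk E$, which follows from the embedding $H^*(C_GE)/\p E^{C_GE}\hookrightarrow H^*(E)/\text{Rad}$ together with finiteness of $H^*(E)$ over $H^*(C_GE)$ (Theorem \ref{Ev}(2)). The remainder is a structural repackaging of the dimension count already carried out in Theorem \ref{Sch} together with Duflot's non-zero-divisor criterion, so no substantial obstacle remains beyond keeping the chain $\dim J=\omega_G\leq\dim Q\leq\dim J$ tight.
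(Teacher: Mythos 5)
Your proposal is correct, and for parts (1) and (3) it follows essentially the paper's route: Duflot's theorem applied to the central subgroup $E\sub C_GE$ gives $ZD(H^*(C_GE))\sub\frak p_E^{C_GE}$, and then a dimension count forces an associated prime $Q\supseteq\an(\res\,y)$ sitting inside $\frak p_E^{C_GE}$ to coincide with it. (Your way of computing $\dim Q$ — squeezing it between $\dim J=\omega_G$ from Theorem \ref{Sch} and $\omega_{C_GE}=\omega_G$ — and your direct computation of $\dim\frak p_E^{C_GE}=\rk E$ via the finite embedding $H^*(C_GE)/\frak p_E^{C_GE}\hookrightarrow H^*(E)/\text{Rad}$ are cosmetically different from the paper, which instead pulls both primes back along $\res$ and uses $\dim\res^{-1}I=\dim I$; the content is the same.) Where you genuinely diverge is part (2): you get $\res^{-1}\frak p_E^{C_GE}=\frak p_E^G$ immediately from transitivity of restriction, $\res^G_E=\res^{C_GE}_E\circ\res^G_{C_GE}$, applied to the defining kernels. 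The paper instead proves (2) by sandwiching, $\frak p_E\sub\res^{-1}\an(\res\,y)\sub\res^{-1}Q\sub\res^{-1}\frak p_E^{C_GE}$, and then invoking equality of dimensions of the two end primes. Your argument is cleaner, needs no hypothesis that $E\in AP^{\omega_G}$ (it holds for any elementary abelian $E$), and decouples (2) from the machinery of Theorem \ref{Sch}; the paper's version has the mild virtue of producing along the way the chain of inclusions that it then reuses in part (3). Both are valid.
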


\begin{proof} (1) Let $x\notin\frak p^{C_GE}_E$. Then $\res^{C_GE}_E\,x$ is not in the radical of $H^*(E)$ and therefore a non-zero divisor on $H^*(E)$.  By Duflot's Theorem $x$ is a non-zero divisor on $H^*(C_GE)$. Since the zero divisors are the union of the associated primes, the last statement follows. 
\par (2) From \ref{Sch} $\res\,y\neq 0$, $\an(\res\,y)\sub Q\sub\frak p^{C_GE}_E$ where $Q\in\as\,H^*(C_GE)$.  Hence
$$ \frak p_E\sub\res^{-1}\an(\res\,y)\sub\res^{-1}Q\sub\res^{-1}\frak p^{C_GE}_E. $$
Since $\frak p_E$ and $\res^{-1}\frak p^{C_GE}_E$ are primes of $H^*(G)$ of the same dimension, namely the rank of $E$, they must be equal.
\par (3) Since $H^*(C_GE)$ is \fg over $H^*(G)$, for any ideal $J\sub H^*(C_GE)$, $\dim\,\res^{-1}J=\dim\,J$ by the Going-Up Theorem.  Therefore $\dim\,Q=\dim\,\frak p^{C_GE}_E$ and since they are both primes with $Q\sub\frak p^{C_GE}_E$ they are equal.
\end{proof}

Every associated prime of a ring $R$ is the annihilator of an element $u\in R$ or equivalently the radical of the annihilator of some element $v\in R$.  Although we cannot determine $u\in H^*(C_GE)$ with $\frak p^{C_GE}_E=\an u$ we do have
\begin{thm} $\frak p^{C_GE}_E=\text{rad}\,(\an(\res\,y))$. \end{thm}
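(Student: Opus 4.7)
The plan is to show that every minimal prime of $R := H^*(C_GE)$ over the ideal $J := \an(\res\,y)$ must coincide with $\frak p^{C_GE}_E$; since $\text{rad}(J)$ is the intersection of these minimal primes, this will give the desired equality.

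Because $\res\,y\neq 0$ by Theorem \ref{Sch}, $J$ is a proper ideal and so has at least one minimal prime over it. Any such minimal prime $P$ lies in $\as(R/J)$, and via the module isomorphism $R/J\cong R\cdot(\res\,y)\sub R$ one has $\as(R/J)\sub\as R$. Consequently part (1) of the previous theorem forces $P\sub\frak p^{C_GE}_E$.

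The crux is a dimension squeeze. Because $P\supseteq J$ we have $\dim R/P\leq\dim R/J$, and the chain of inequalities in the proof of Theorem \ref{Sch} identifies $\dim R/J=\omega_G$. On the other hand $H^*(C_GE)$ is a finitely generated graded $k$-algebra (Theorem \ref{Ev}), so any proper inclusion of primes strictly decreases the Krull dimension of the quotient; thus $P\sub\frak p^{C_GE}_E$ yields
\[ \dim R/P\geq \dim R/\frak p^{C_GE}_E=\omega_G, \]
with equality forcing $P=\frak p^{C_GE}_E$. The two bounds together give $\dim R/P=\omega_G$ and hence $P=\frak p^{C_GE}_E$ for every minimal prime over $J$. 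The only delicate ingredient in this chain is the strictness of the dimension drop for proper inclusions of primes in $R$, which is a standard consequence of the dimension theory of finitely generated $k$-algebras and I would cite rather than re-derive.
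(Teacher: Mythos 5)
Your proof is correct, and it takes a genuinely different route from the paper's. The paper localizes: writing $P=\frak p^{C_GE}_E$ and $J=\an(\res\,y)$, it passes to the homogeneous localization $R_{(P)}$, shows $\dim J_{(P)}=0$ there (so $(P_{(P)})^s\sub J_{(P)}$), and then contracts back to get the explicit nilpotency $P^s\sub J$ --- the delicate step being that $\pi^{-1}J_{(P)}=J$ because $J$ is the annihilator of a homogeneous element and every homogeneous element outside $P$ is a non-zero-divisor. You instead work globally with $\text{rad}(J)=\bigcap\{\text{minimal primes over }J\}$: the inclusion $\as(R/J)=\as(R\cdot\res\,y)\sub\as R$ together with part (1) of the preceding theorem puts every minimal prime $P'$ over $J$ inside $P$, and the squeeze $\omega_G=\dim R/P\leq\dim R/P'\leq\dim R/J=\omega_G$ (using that a proper inclusion of primes in a finitely generated $k$-algebra strictly drops the dimension of the quotient) forces $P'=P$. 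Both arguments ultimately rest on the same two pillars --- all zero-divisors of $H^*(C_GE)$ lie in $P$, and $\dim J=\dim P=\omega_G$ --- but yours dispenses with the localization machinery and the contraction lemma entirely, at the cost of not exhibiting the power $s$ with $P^s\sub J$ (which for a Noetherian ring is of course equivalent to $P\sub\text{rad}\,J$, so nothing is lost). Your version is the cleaner of the two; just make sure the citation for the strict dimension drop is to the affine dimension formula for finitely generated algebras over a field, since that strictness can fail for general Noetherian rings.
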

\begin{proof} This will follow from a sequence of observations and propositions.  Let $P=\frak p^{C_GE}_E$.  Recall $J=\an\,\res\,y\sub P$ with $P\in \text{Ass}\,H^*(C_GE)$ and $\dim J=\dim P$.
\medskip
\par (1) If $(R,M)$ is a local or *-local ring with $\dim R=0$ then $M^s=(0)$ for some $s>0$.
\begin{proof} $R$ is Artinian so $M^s=M^{s+1}=M\, M^s$ for some $s>0$ and therefore by Nakayama's Lemma, $M^s=(0)$.\end{proof}

\par (2) $(R,M)$ local or *-local and $J\sub M$, homogeneous in the *-local case, then $\dim R/J=0$ if and only if $M^s\sub J$ for some $s>0$.
\begin{proof} $(\bar R,\bar M)=(R/J, M/J)$ is local or *-local.  If $\dim\bar R=0$ then $\bar M^s=(\bar 0)$, i.e. $M^s\sub J$.  If $M^s\sub J$ then $V(M)={M}\sub V(J)\sub V(M^s)=\cup_sV(M)={M}$.  Hence $V(J)={M}$. Here $V(I)$ denotes the homogenous spectrum of a homogeneous ideal. Therefore $\dim J=\dim R/J=0$. \end{proof}

Now let $R=H^*(C_GE)$ and let $R_{(P)}$ denote the homogeneous localization of $R$, i.e., only invert the homogeneous elements not in $P$.  Let $\pi:R\to R_{(P)}$ denote the natural map.

\par (3) If $I$ is a homogeneous annihilator ideal contained in $P\sub R$, then $I=\pi^{-1}I_{(P)}$.
\begin{proof} If $S$ denotes the set of homogeneous elements not in $P$, then  
$$ \pi^{-1}I_{(P)}=\{a\in R\vv \text{there exists } s\in S\text{ with }s\,a\in I \}.$$ If $I=\an\,U\sub P$ and $s$ is homogeneous not in $P$, $a\in R$ and $s\,a\in I$, then for all $u\in U$, $(s\,a)u=s(a\,u)=0$ and hence $a\,u=0$ since $s$ is a non-zro divisor. That is $a\in I$ and $I=\pi^{-1}I_{(P)}$. \end{proof}

\par (4) $J=\an\,\res\,y\sub P$ and $J$, $P$ are homogeneous of same dimension. Every homogeneous prime of $R_{(P)}$ is $Q_{(P)}$ for some unique homogeneous prime $Q\sub P$ of $R$.  Therefore if $J_{(P)}\sub Q_{(P)}\sub P_{(P)}$ which implies
$$ J=\pi^{-1}J_{(P)}\sub Q=\pi^{-1} Q_{(P)}\sub P=\pi^{-1} P_{(P)}.$$
Hence $Q=P$, $Q_{(P)}=P_{(P)}$ and $\dim J_{(P)}=0$.  Therefore there exists $s\geq 1$ with $(P_{(P)})^s\sub J_{(P)}$.
Hence $$ P^s\sub \pi^{-1}(P^s)_{(P)}= \pi^{-1}(P_{(P)})^s\sub \pi^{-1}J_{(P)}=J $$
and $P\sub\text{rad}\,J$. Since the converse is obvious $P=\text{rad}\,(\an\,\res\,y)$.
\end{proof}

\end{document}